\newtheorem{theorem}{Theorem}[section]
\newtheorem{lemma}[theorem]{Lemma}
\newtheorem{definition}[theorem]{Definition}
\newtheorem{proposition}[theorem]{Proposition}
\newtheorem{corollary}[theorem]{Corollary}
\newtheorem{example}[theorem]{Example}
\begin{document}

\title[On Gibson functions with connected graphs]{On Gibson functions with connected graphs}

\author[Olena Karlova \and Volodymyr Mykhaylyuk]{Olena Karlova* \and Volodymyr Mykhaylyuk**}

\newcommand{\acr}{\newline\indent}

\address{\llap{*\,} Chernivtsi National University\acr
                    Department of Mathematical Analysis\acr
                    Kotsjubyns'koho 2\acr
                    Chernivtsi 58012\acr
                    UKRAINE}
\email{Maslenizza.Ua@gmail.com}

\address{\llap{**\,} Chernivtsi National University\acr
                    Department of Mathematical Analysis\acr
                    Kotsjubyns'koho 2\acr
                    Chernivtsi 58012\acr
                    UKRAINE}
\email{vmykhaylyuk@ukr.net}

\subjclass[2010]{Primary 26B05, 54C08; Secondary  26A21, 54C10}

\keywords{Gibson function, Darboux function, peripherally continuous functions, function with connected graph, Baire-one function}

\begin{abstract}
 A function $f:X\to Y$ between topological spaces is said to be a {\it weakly Gibson function} if $f(\overline{G})\subseteq \overline{f(G)}$ for any open connected set \mbox{$G\subseteq X$}. We call a function $f:X\to  Y$ {\it segmentary connected} if $X$ is topological vector space and $f([a,b])$ is connected for every segment $[a,b]\subseteq X$. We show that if $X$ is a hereditarily Baire space, $Y$ is a metric space, \mbox{$f:X\to Y$} is a Baire-one function and one of the following conditions holds: (i) $X$ is a connected and locally connected space and $f$ is a weakly Gibson function, (ii) $X$ is an arcwise connected space and $f$ is a Darboux function, (iii) $X$ is a topological vector space and $f$ is a segmentary connected function, then $f$ has a connected graph.
\end{abstract}

\maketitle

\section{Introduction}

In 1907, J.~Young showed \cite{Y} that for a Baire-one function \mbox{$f:{\mathbb R}\to {\mathbb R}$}, the Darboux property is equivalent to the following property: for every $x\in\mathbb R$ there exist $(x_n)_{n=1}^\infty$ and $(y_n)_{n=1}^\infty$ such that $x_n\uparrow x$, $y_n\downarrow x$ and $\lim\limits_{n\to\infty}f(x_n)=\lim\limits_{n\to\infty}f(y_n)=f(x)$. In more general spaces, functions having the property of Young are said to be peripherally continuous (see definition in Section~2).

K.~Kuratowski and W.~Sierpi\'{n}ski in 1922 proved that a Baire-one function \mbox{$f:{\mathbb R}\to {\mathbb R}$} has the Darboux property if and only if $f$ has a connected graph \cite{KS}. Remark that for functions $f:{\mathbb R}\to Y$, where $Y$ is a topological space, having a connected graph is equivalent to being a connectivity function (see definition in Section~2).

M.~Hagan \cite{H} and  G.~Whyburn \cite{W} showed that a function $f:{\mathbb R}^n\to {\mathbb R}$ for $n>1$ is connectivity if and only if $f$ is peripherally continuous.

J.~Farkov\'{a} in \cite{JF} introduced a similar property to the Young property: a function $f:X\to Y$ between metric spaces $X$ and $Y$ has the property $\mathcal A$ if for any open connected non-trivial set $G\subseteq X$ and a point $x\in\overline{G}$ there exists a sequence $(x_n)_{n=1}^\infty$, $x_n\in G\setminus\{x\}$, such that $\lim\limits_{n\to\infty}x_n=x$ and $\lim\limits_{n\to\infty}f(x_n)=f(x)$. She proved that for $X=\mathbb R$ every Darboux function has the property ${\mathcal A}$, but for $X={\mathbb R}^2$ this is not true. Moreover, Farkov\'{a} showed that if $X$ is a connected and locally connected complete metric space, $Y$ is a separable metric space and $f:X\to Y$ is a Baire-one function having the property ${\mathcal A}$, then $f$ has a connected graph.

Recently, K.~Kellum \cite{Kel1} introduced and investigated properties of Gibson and weakly Gibson real-valued  functions (see Definition \ref{d1})  defined on a connected space. Some properties of Gibson functions (in particular, Darboux-like properties or continuity of Baire-one Gibson functions) are also studied by M.J.~Evans and P.D. Humke in \cite{EH}.

In this work we introduce a strong Gibson property (see Definition \ref{d1}) which is equivalent to the property $\mathcal A$  for a function between metric spaces.
In Section~3 we study how weak and strong Gibson properties connect with the property of peripherally continuity and with the Darboux property.
In the forth section we consider Baire-one Gibson functions.
In Section 5 we introduced segmentary connected functions defined on a topological vector space and investigate some of their properties.
 Finally, in the sixth section we prove different generalizations of the mentioned result from \cite{JF}. We show that if $X$ is a hereditarily Baire space, $Y$ is a metric space and \mbox{$f:X\to Y$} is a Bare-one function, then each of the following conditions is sufficient for $f$ to have a connected graph: (i) $X$ is a connected and locally connected space and $f$ is weakly Gibson; (ii) $X$ is an arcwise connected space and $f$ is a Darboux function; (iii) $X$ is a topological vector space and $f$ is a segmentary connected function.

\section{Preliminaries}

We will denote by ${\mathcal T}(X)$ and ${\mathcal C}(X)$ the collection of all open and connected subsets of a topological space $X$, respectively. Let ${\mathcal G}(X)={\mathcal T}(X)\cap {\mathcal C}(X)$.

Let $X$ and $Y$ be topological spaces and let $f:X\to Y$ be a function. Then
  \begin{itemize}

    \item $f$ is a {\it Darboux function}, $f\in {\rm D}(X,Y)$, if $f(C)$ is connected for any $C\in{\mathcal C}(X)$;

   \item $f$ is a {\it weakly Darboux function}, $f\in {\rm wD}(X,Y)$, if $f(G)$ is connected for any $G\in {\mathcal G}(X)$;

    \item $f$ {\it has a connected graph}, $f\in {\rm CG}(X,Y)$, if the set ${\rm Gr}\,(f)=\{(x,f(x)):x\in X\}$ is connected;

    \item $f$ is a {\it connectivity function}, $f\in {\rm Conn}(X,Y)$, if the graph of $f|_C$ is connected for any $C\in{\mathcal C}(X)$;

    \item $f$ is {\it peripherally continuous at $x\in X$} if for an arbitrary open neighborhoods $U$ and $V$ of $x$ and $f(x)$, respectively, there exists an open neighborhood $W$ of $x$ such that $W\subseteq U$ and $f({\rm fr}\,W)\subseteq V$, where ${\rm fr}\,W$ denotes the boundary of $W$; $f$ is {\it peripherally continuous on $X$}, $f\in {\rm PC}(X,Y)$, if $f$ is peripherally continuous at every point $x$ of $X$;

    \item $f$ is a {\it Baire-one function}, $f\in {\rm B_1}(X,Y)$, if there exists a sequence of continuous functions $f_n:X\to Y$ such that $\lim\limits_{n\to\infty}f_n(x)=f(x)$ for every $x\in X$;

    \item $f$ is {\it pointwise discontinuous} if the set of all continuity points of $f$ is dense in $X$;

    \item $f$ is {\it barely continuous} if the restriction $f|_F$ has a continuity point for any closed set $F\subseteq X$.

  \end{itemize}

The class of all continuous functions between $X$ and $Y$ we denote by ${\rm C}(X,Y)$.

It is easy to see that
  $$
  {\rm C}(X,Y)\subseteq {\rm Conn}(X,Y)\subseteq {\rm D}(X,Y)\subseteq {\rm wD}(X,Y)
  $$
  for arbitrary topological spaces $X$ and $Y$.

  The basic relations between the classes described above for $X=Y=\mathbb R$ are given in the following chart.
  $$
  {\rm C}({\mathbb R}, {\mathbb R})\subset {\rm Conn}({\mathbb R},{\mathbb R})={\rm CG}({\mathbb R},{\mathbb R})\subset {\rm D}({\mathbb R},{\mathbb R})\subset {\rm PC}({\mathbb R},{\mathbb R}).
  $$
  For the Baire-one functions the chart changes to the following.
  $$
  {\rm C}({\mathbb R}, {\mathbb R})\subset {\rm Conn}({\mathbb R},{\mathbb R})={\rm CG}({\mathbb R},{\mathbb R})={\rm D}({\mathbb R},{\mathbb R})= {\rm PC}({\mathbb R},{\mathbb R}).
  $$
 If  $X={\mathbb R}^n$,  $Y={\mathbb R}$, $n\ge 2$, then
  $$
  {\rm C}({\mathbb R}^n,{\mathbb R})\subset {\rm Conn}({\mathbb R}^n,{\mathbb R})={\rm PC}({\mathbb R}^n,{\mathbb R})\subset {\rm D}({\mathbb R}^n,{\mathbb R}).
  $$
For a thorough treatment we refer the reader to \cite{EGP,CJ,GN}.

Topological space  $X$ is {\it hereditarily Baire} if every closed subset of $X$ is a Baire space.

Topological space $X$ is {\it resolvable} if $X$ is a union of two disjoint everywhere dense subsets.

A subset $A$ of topological vector space $X$ is {\it balanced} if $\lambda A\subseteq A$ whenever $|\lambda|\le 1$.

For a function $f:X\to Y$ between topological spaces by $C(f)$ and $D(f)$ we denote the sets of continuity and discontinuity points of $f$, respectively.

\section{The Gibson properties and their connection with\\ the Darboux-like properties}

\begin{definition}\label{d1}
{\rm  Let $X$ and $Y$ be topological spaces. A function $f:X\to Y$ is

  \begin{itemize}
  \item {\it  weakly Gibson at $x\in X$} if $f(x)\in \overline{f(G)}$ for any \mbox{$G\in{\mathcal G}(X)$} such that $x\in \overline{G}$;

  \item {\it Gibson at $x\in X$} if $f(x)\in \overline{f(G)}$ for any \mbox{$G\in{\mathcal T}(X)$} such that $x\in \overline{G}$;

  \item {\it strongly Gibson at $x\in X$} if $f(x)\in \overline{f(G\cap U)}$ for any $G\in{\mathcal G}(X)$ such that $x\in \overline{G}$ and for any open neighborhood  $U$ of $x$.
  \end{itemize}

  A function $f:X\to Y$ has {\it (weak, strong) Gibson property on $X$} if it has such a property at every point of $X$.
  Classes of all such functions we denote by ${\rm wG}(X,Y)$, ${\rm  G}(X,Y)$ and ${\rm sG}(X,Y)$, respectively.}
\end{definition}

It is evident that
$${\mathrm C}(X,Y)\subseteq {\rm sG}(X,Y)\subseteq {\rm wG}(X,Y)$$ for any topological spaces $X$ and $Y$.

\begin{proposition}
Let $X$ be a locally connected space such that ${\mathcal G}(X)$  is closed under finite intersections and $Y$ be a topological space. Then
$${\rm wG}(X,Y)={\rm sG}(X,Y).$$
\end{proposition}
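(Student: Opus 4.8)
The plan is to prove the nontrivial inclusion $\mathrm{wG}(X,Y)\subseteq \mathrm{sG}(X,Y)$, since the reverse inclusion $\mathrm{sG}(X,Y)\subseteq \mathrm{wG}(X,Y)$ holds for arbitrary spaces and was already recorded just before Definition \ref{d1}. So I fix $f\in\mathrm{wG}(X,Y)$ and aim to verify the strong Gibson condition at an arbitrary point.

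To that end, fix $x\in X$, a set $G\in\mathcal{G}(X)$ with $x\in\overline{G}$, and an open neighborhood $U$ of $x$; the goal is to show $f(x)\in\overline{f(G\cap U)}$. The idea is to manufacture a single admissible open connected set to which the weak Gibson hypothesis can be applied, by intersecting $G$ with a small connected neighborhood of $x$. Since $X$ is locally connected, I choose $W\in\mathcal{G}(X)$ with $x\in W\subseteq U$, and set $H=G\cap W$. The assumption that $\mathcal{G}(X)$ is closed under finite intersections then guarantees $H\in\mathcal{G}(X)$, i.e. $H$ is again open and connected.

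The key step is to check that $x\in\overline{H}$. For an arbitrary open neighborhood $V$ of $x$, the set $V\cap W$ is again an open neighborhood of $x$; since $x\in\overline{G}$, it meets $G$, so $V\cap H=V\cap(G\cap W)\neq\varnothing$. Thus every neighborhood of $x$ meets $H$, so $x\in\overline{H}$ (in particular $H\neq\varnothing$). Now applying the weak Gibson property of $f$ at $x$ to the set $H$ yields $f(x)\in\overline{f(H)}$. Finally, because $W\subseteq U$ we have $H=G\cap W\subseteq G\cap U$, hence $f(H)\subseteq f(G\cap U)$ and therefore $\overline{f(H)}\subseteq\overline{f(G\cap U)}$. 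Combining these gives $f(x)\in\overline{f(G\cap U)}$, which is precisely the strong Gibson condition at $x$; as $x$ was arbitrary, $f\in\mathrm{sG}(X,Y)$.

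I expect the only real (though mild) obstacle to be the verification that $x$ lies in the closure of $H=G\cap W$ rather than merely in $\overline{G}$: this is exactly where the openness of $W$ is used. The two standing hypotheses play complementary roles here — local connectedness produces the connected open neighborhood $W\subseteq U$, while closure of $\mathcal{G}(X)$ under finite intersections ensures that $G\cap W$ is still an eligible member of $\mathcal{G}(X)$ for the weak Gibson hypothesis.
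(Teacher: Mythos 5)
Your proof is correct and takes essentially the same approach as the paper: intersect $G$ with a connected open neighborhood $W\subseteq U$ of $x$ (local connectedness), use closure of ${\mathcal G}(X)$ under finite intersections to see $G\cap W\in{\mathcal G}(X)$, and apply the weak Gibson property at $x$ to this set. The only difference is that you spell out the verification that $x\in\overline{G\cap W}$, a step the paper simply asserts.
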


\begin{proof}
   Let $f\in{\rm wG}(X,Y)$, $x\in X$, $G\in{\mathcal G}(X)$, $x\in\overline{G}$ and $U$ be a neighborhood of $x$. We can choose an open connected neighborhood $V$ of $x$ such that $V\subseteq U$. Then $V\cap G\in{\mathcal G}(X)$ and $x\in\overline{V\cap G}$. Therefore, $f(x)\in\overline{f(V\cap G)}\subseteq\overline{f(U\cap G)}$. Hence, $f\in {\rm sG}(X,Y)$.
\end{proof}

\begin{corollary}
  ${\rm wG}({\mathbb R},Y)={\rm sG}({\mathbb R},Y)$ for an arbitrary $Y$.
\end{corollary}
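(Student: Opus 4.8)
The plan is to verify that the real line $\mathbb R$ satisfies the two structural hypotheses of the preceding Proposition, after which the conclusion is immediate. First I would recall that $\mathbb R$ is locally connected: every point possesses a neighborhood base consisting of open intervals, each of which is connected. This disposes of the local connectedness requirement without further work.

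The remaining point is to check that ${\mathcal G}(\mathbb R)$ is closed under finite intersections. Here I would invoke the classical description of the connected subsets of $\mathbb R$ as intervals; it follows that the members of ${\mathcal G}(\mathbb R)$ are precisely the open intervals, allowing the unbounded ones and $\mathbb R$ itself. Since the intersection of two open intervals $(a,b)$ and $(c,d)$ equals $(\max\{a,c\},\min\{b,d\})$, which is again an open interval (or empty), the family ${\mathcal G}(\mathbb R)$ is stable under finite intersections. With both hypotheses verified, applying the Proposition with $X=\mathbb R$ yields ${\rm wG}(\mathbb R,Y)={\rm sG}(\mathbb R,Y)$ for every topological space $Y$.

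I expect no genuine obstacle, since the entire content reduces to the observation that open intervals are closed under intersection, combined with the Proposition already established. The only point deserving a word of care is the degenerate case in which the intersection is empty; this causes no difficulty, because in the mechanism of the Proposition the relevant intersection $V\cap G$ contains the point $x$ in its closure and is therefore automatically nonempty.
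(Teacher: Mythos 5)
Your proposal is correct and matches the paper's intent exactly: the corollary is stated there without proof precisely because it follows by verifying, as you do, that $\mathbb{R}$ is locally connected and that ${\mathcal G}(\mathbb{R})$ (the open intervals) is closed under finite intersections, and then applying the preceding Proposition. Your remark on the empty intersection is a sensible touch of care, and is indeed harmless for the reason you give.
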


\begin{proposition}\label{6}
  Let $X$ be a topological space and  $Y$ be a $T_1$-space. Then
  $${\rm D}(X,Y)\subseteq {\rm wG}(X,Y).$$
\end{proposition}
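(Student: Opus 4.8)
The plan is to exploit the Darboux hypothesis by enlarging the open connected set $G$ with its boundary point $x$ and then reading off connectedness of the image. Fix $f\in\mathrm{D}(X,Y)$, a point $x\in X$ and a set $G\in\mathcal{G}(X)$ with $x\in\overline{G}$; the goal is to prove $f(x)\in\overline{f(G)}$. First I would observe that $x\in\overline{G}$ forces $G\neq\emptyset$, and that adjoining a point of the closure to a connected set preserves connectedness. Hence the set $C=G\cup\{x\}$ satisfies $G\subseteq C\subseteq\overline{G}$ and is therefore connected.

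Next I would apply the Darboux property to $C$: the image $f(C)=f(G)\cup\{f(x)\}$ is a connected subset of $Y$. The argument then proceeds by contradiction. Suppose $f(x)\notin\overline{f(G)}$. Then there is an open set $W\subseteq Y$ with $f(x)\in W$ and $W\cap f(G)=\emptyset$, so that $W\cap f(C)=\{f(x)\}$; this shows $\{f(x)\}$ is open in the subspace $f(C)$.

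This is where the $T_1$ assumption enters, and it is the only essential use of it: since $Y$ is $T_1$, the singleton $\{f(x)\}$ is closed in $Y$, hence closed in $f(C)$. Thus $\{f(x)\}$ is a clopen subset of $f(C)$. Because the assumption $f(x)\notin\overline{f(G)}$ forces $f(x)\notin f(G)$ while $f(G)\neq\emptyset$, the set $f(C)$ has at least two points and $\{f(x)\}$ is a proper nonempty clopen subset, contradicting the connectedness of $f(C)$. Therefore $f(x)\in\overline{f(G)}$, which is precisely the weak Gibson property at $x$; as $x$ and $G$ were arbitrary, $f\in\mathrm{wG}(X,Y)$.

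I do not expect a genuine obstacle here: the heart of the argument is the standard ``a clopen singleton disconnects'' trick. The only points demanding care are the bookkeeping about which closures and neighborhoods are taken in $Y$ versus in the subspace $f(C)$, and the verification that $f(G)$ is nonempty so that the contradiction is not vacuous — the latter following immediately from $x\in\overline{G}$.
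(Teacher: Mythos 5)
Your proof is correct and takes essentially the same approach as the paper's: adjoin the closure point $x$ to $G$, apply the Darboux property to the connected set $G\cup\{x\}$, and derive a contradiction by exhibiting a separation of $f(G\cup\{x\})$. If anything, your write-up is slightly more careful than the paper's, since you make explicit that the $T_1$ hypothesis is exactly what makes $\{f(x)\}$ closed, hence clopen, in $f(G\cup\{x\})$ --- a point the paper's proof leaves implicit in writing $f(G\cup\{x\})\subseteq (Y\setminus V)\sqcup\{f(x)\}$.
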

\begin{proof}
 Let $f\in{\rm D}(X,Y)$. Choose  $x\in X$ and $G\in{\mathcal G}(X)$ such that $x\in {\rm fr}\, G$. Fix an arbitrary neighborhood $V$ of  $f(x)$ in $Y$. Since $G$ is connected, $G\cup\{x\}$ is connected too. Then $f(G\cup\{x\})$ is connected since $f$ has the Darboux property. Suppose that $f(G)\subseteq Y\setminus V$. Then
 $$
 f(G\cup\{x\})=f(G)\cup f(x)\subseteq (Y\setminus V)\sqcup\{f(x)\},
 $$
 which contradicts the fact that $f(G\cup\{x\})$ is connected.
 \end{proof}

 We will apply the following results to construct a function {$f:{\mathbb R}^2\to {\mathbb R}$} of the first Baire class such that $f$ has the weak Gibson property, but has no strong Gibson property.

\begin{proposition}\label{13}
  Let $X$, $Y$ and $Z$ be topological spaces, $f\in{\rm wG}(X,Z)$ and $g(x,y)=f(x)$ for all $(x,y)\in X\times Y$. Then $g\in{\rm wG}(X\times Y,Z)$.
\end{proposition}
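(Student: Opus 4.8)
The plan is to reduce the weak Gibson property of $g$ to that of $f$ through the coordinate projection $\pi\colon X\times Y\to X$, $\pi(x,y)=x$. The guiding observation is that $g=f\circ\pi$ by the very definition of $g$, so for every set $W\subseteq X\times Y$ one has $g(W)=f(\pi(W))$; the whole argument is then a matter of transporting the relevant open connected set and its closure point back and forth along $\pi$.

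Concretely, I would fix a point $(x,y)\in X\times Y$ together with a set $W\in{\mathcal G}(X\times Y)$ such that $(x,y)\in\overline{W}$, and aim to show $g(x,y)\in\overline{g(W)}$. The first step is to set $G:=\pi(W)$ and check that $G\in{\mathcal G}(X)$: since $\pi$ is continuous and connectedness is preserved under continuous images, $G$ is connected, and since $\pi$ is an \emph{open} map, $G$ is open. The second step is to confirm $x\in\overline{G}$; this follows from the continuity of $\pi$, which gives $\pi(\overline{W})\subseteq\overline{\pi(W)}=\overline{G}$, so that $x=\pi(x,y)\in\overline{G}$. The final step is to invoke the hypothesis $f\in{\rm wG}(X,Z)$ at the point $x$ for the set $G\in{\mathcal G}(X)$, obtaining $f(x)\in\overline{f(G)}$; rewriting via $g=f\circ\pi$ this reads $g(x,y)=f(x)\in\overline{f(\pi(W))}=\overline{g(W)}$, which is exactly the desired conclusion. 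As $(x,y)$ and $W$ were arbitrary, $g\in{\rm wG}(X\times Y,Z)$.

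I do not expect a genuine obstacle here; the argument is a direct factorization through $\pi$. The single point that actually carries weight is the \emph{openness} of the projection $\pi$, since it is this (and not merely continuity) that ensures $\pi(W)$ is an element of ${\mathcal G}(X)$ rather than just a connected set to which the weak Gibson hypothesis would not apply. Everything else is a routine use of the standard inclusion $\pi(\overline{W})\subseteq\overline{\pi(W)}$ and the identity $g=f\circ\pi$.
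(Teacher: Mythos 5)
Your proof is correct and takes essentially the same approach as the paper: both arguments hinge on the projection $\pi\colon X\times Y\to X$ being an open continuous map, so that $\pi(W)\in{\mathcal G}(X)$ and the base point lies in $\overline{\pi(W)}$, after which the weak Gibson property of $f$ is applied. Your set identity $g(W)=f(\pi(W))$ merely streamlines the paper's concluding step, which instead fixes a neighborhood $V$ of $g(p_0)$ in $Z$ and lifts a point $x\in\pi(W)$ with $f(x)\in V$ back to some $(x,y)\in(\{x\}\times Y)\cap W$ to witness $g(W)\cap V\ne\emptyset$.
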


\begin{proof}
  Fix $G\in{\mathcal G}(X\times Y)$ and $p_0=(x_0,y_0)\in X\times Y$, $p_0\in\overline{G}$. Choose any neighborhood  $W$ of $g(p_0)$ in $Z$. Since a projection is an open map, $U={\rm pr}_{\mathbb R} G$ is an open and connected set in $X$. Notice that $x_0\in\overline{U}$. Since $f$ has the weak Gibson property at $x_0$, there exists $x\in U$ such that $f(x)\in W$. Take an arbitrary $(x,y)\in(\{x\}\times Y)\cap G$.  Then  $g(x,y)=f(x)\in W$. Hence, $g$ has is weakly Gibson at $p_0$.
\end{proof}

\begin{proposition}\label{14}
  Let $f:{\mathbb R}\to {\mathbb R}$ be a function and $g(x,y)=f(x)$ for all \mbox{$(x,y)\in {\mathbb R}^2$}. If $g\in {\rm sG}({\mathbb R}^2,{\mathbb R})\cap {\rm B_1}({\mathbb R}^2,{\mathbb R})$, then $f\in {\rm C}({\mathbb R},{\mathbb R})$.
\end{proposition}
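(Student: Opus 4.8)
The plan is to obtain continuity of $f$ from two ingredients: the Baire-one property, which it inherits from $g$ and which makes the set $C(f)$ of continuity points dense, and the strong Gibson property of $g$, which I will convert into a strong one–dimensional separation condition on $f$. First I would restrict $g$ to the line $y=0$: writing $g=\lim_n g_n$ with $g_n$ continuous, we get $f(x)=g(x,0)=\lim_n g_n(x,0)$ with each $x\mapsto g_n(x,0)$ continuous, so $f\in{\rm B_1}(\mathbb R,\mathbb R)$; since $\mathbb R$ is a Baire space, $C(f)$ is a dense $G_\delta$.

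The technical heart is the following property $(\ast)$ of $f$: for every $x_0\in\mathbb R$ and every $\varepsilon>0$ there is \emph{no} open set $O$ with $x_0\in\overline O$ and $O\subseteq\{x:|f(x)-f(x_0)|\ge\varepsilon\}$. I would prove $(\ast)$ by contraposition, using $g\in{\rm sG}(\mathbb R^2,\mathbb R)$. Suppose such an $O$ existed; fix $y_0=0$ and any $r>0$, put $U=(x_0-r,x_0+r)\times(-r,r)$ and $W=O\cap(x_0-r,x_0+r)$ (open, nonempty, with $x_0\in\overline W$ because $x_0\in\overline O$), and set $G=\bigl(W\times(-2r,r)\bigr)\cup\bigl(\mathbb R\times(-2r,-r)\bigr)$. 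The horizontal band $\mathbb R\times(-2r,-r)$ lies below $U$ and meets every vertical strip over a component of $W$, so $G$ is open and connected, $(x_0,0)\in\overline G$, and ${\rm pr}_{\mathbb R}(G\cap U)=W\subseteq O$. Hence $\overline{g(G\cap U)}=\overline{f(W)}\subseteq\{t:|t-f(x_0)|\ge\varepsilon\}$, which omits $f(x_0)=g(x_0,0)$, contradicting the strong Gibson property of $g$ at $(x_0,0)$.

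Granting $(\ast)$, I would next show that $\lim_{C(f)\ni s\to x_0}f(s)=f(x_0)$ for every $x_0$. Indeed, if this failed there would be $\eta>0$ and points $s_j\in C(f)$ with $s_j\to x_0$ and $|f(s_j)-f(x_0)|\ge\eta$; using continuity of $f$ at each $s_j$ I pick an interval $I_j\ni s_j$ of diameter $<1/j$ on which $|f-f(s_j)|<\eta/2$, so that $|f-f(x_0)|>\eta/2$ on $I_j$. Then $O=\bigcup_j I_j$ is open, $x_0\in\overline O$, and $O\subseteq\{|f-f(x_0)|\ge\eta/2\}$, contradicting $(\ast)$. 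Finally I conclude full continuity by a density argument: given $\varepsilon>0$, choose $\delta>0$ with $|f(s)-f(x_0)|<\varepsilon/2$ whenever $s\in C(f)$ and $|s-x_0|<\delta$; for an arbitrary $x$ with $|x-x_0|<\delta/2$, density of $C(f)$ and the limit relation let me pick $s\in C(f)$ with $|s-x|$ so small that $|f(s)-f(x)|<\varepsilon/2$ and $|s-x_0|<\delta$, whence $|f(x)-f(x_0)|<\varepsilon$. Thus $f\in{\rm C}(\mathbb R,\mathbb R)$.

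The step I expect to be the main obstacle is the reduction $(\ast)$ in the second paragraph: I must realise an essentially arbitrary open set $O$ — which in the third paragraph is only a sparse union of tiny intervals and in general has empty interior near $x_0$ — as ${\rm pr}_{\mathbb R}(G\cap U)$ for a \emph{single connected} open $G$ whose closure contains $(x_0,0)$. Connectedness is the difficulty, since the strips over the components of $O$ are pairwise separated in the $x$-direction; the device that resolves it is to reconnect them through a horizontal band placed entirely below $U$, so that the band contributes nothing to ${\rm pr}_{\mathbb R}(G\cap U)$. Fitting the three $y$-ranges together so that the band connects all strips yet stays out of $U$ is the only place requiring care, and it is exactly here that the two-dimensional domain (as opposed to $\mathbb R$ itself, where ${\rm wG}={\rm sG}$) is used.
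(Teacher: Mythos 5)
Your proof is correct, but it takes a genuinely different route from the paper's, and the difference matters. The paper argues directly at a discontinuity point $x_0$: using that the restriction $f$ is a Baire-one Darboux function, it takes the connected graph $\Gamma_2={\rm Gr}(f|_{(x_0,+\infty)})$, fattens it to the open connected set $G=\bigcup_{p\in\Gamma_2}B\bigl(p,\tfrac{|x-x_0|}{2}\bigr)$, observes that $p_0=(x_0,y_0)\in\overline{G}$ with $y_0=\lim_n f(x_n)\ne f(x_0)$, and claims that the strong Gibson property of $g$ fails at $p_0$ because every sequence $p_n=(x_n',y_n)\in G$ with $p_n\to p_0$ has $g(p_n)=f(x_n')\to y_0$. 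That claim is exactly the difficulty you flag in your closing paragraph, and the paper's construction does not resolve it: $g(x',y')=f(x')$ depends only on the first coordinate, each ball $B_p$ contains a horizontal segment of length $|x-x_0|$ through its centre, and on such a segment $f$ may swing back to $f(x_0)$. Indeed, for the very function to which the proposition is then applied ($f(x)=\sin\frac{1}{x}$, $f(0)=1$), the paper's $G$ contains points $(x',y')$ arbitrarily close to $p_0=(0,-1)$ with $f(x')=1=f(x_0)$, so that particular $G$ witnesses no failure of the strong Gibson property. Your strips-plus-band construction is precisely what repairs this: it forces ${\rm pr}_{\mathbb R}(G\cap U)$ to equal a prescribed open set $W\subseteq O$, so $g(G\cap U)=f(W)$ is controlled exactly. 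The price is that a discontinuity point alone does not furnish the required open set $O$; you must manufacture it from continuity points, which is where the Baire-one hypothesis enters (your third paragraph), followed by the density argument (your fourth). Note that your property $(\ast)$ says exactly that $f$ is Gibson at every point with respect to arbitrary open sets --- equivalently, almost continuous in the sense of Husain, cf. Proposition~\ref{8} --- so your last two paragraphs amount to a self-contained proof, for $X={\mathbb R}$, of the Section~4 theorem that every Gibson Baire-one function from a Baire space to a metric space is continuous. Thus your argument both fixes the gap in the paper's key step and reaches the conclusion through the Gibson/almost-continuity machinery rather than through Darboux and connected-graph considerations.
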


\begin{proof}
  Suppose the contrary and choose $x_0\in D(f)$. Then there exists such a sequence $(x_n)_{n=1}^\infty$ that $\lim\limits_{n\to\infty}x_n=x_0$ and $\lim\limits_{n\to\infty}f(x_n)\ne f(x_0)$. Without loss of the generality we may assume that $x_n>x_0$ for all $n\in\mathbb N$. Remark that the sets $\Gamma_1={\rm Gr}(f|_{[x_0,+\infty]})$ and $\Gamma_2={\rm Gr}(f|_{(x_0,+\infty]})$ are connected since $f:{\mathbb R}\to {\mathbb R}$ is a Baire-one Darboux function. For every $p=(x,f(x))\in \Gamma_2$ write $B_p=B(p,\frac{|x-x_0|}{2})$. Let $G=\bigcup\limits_{p\in\Gamma_2}B_p$. It is easy to see that there exists $y_0\in {\mathbb R}$ such that $p_0=(x_0,y_0)\in\overline{G}$ and $y_0\ne f(x_0)$. Then $g$ does not have strong Gibson property at $p_0$. Indeed, if $(p_n)_{n=1}^\infty$ be a sequence of points $p_n=(x_n', y_n)\in G$ with $p_n\to p_0$, then $g(p_n)=f(x_n')\to y_0\ne f(x_0)=g(p_0)$, which implies a contradiction.
\end{proof}

Propositions \ref{13} and \ref{14} now leads to the following example.

\begin{example}
  Let $f:{\mathbb R}\to {\mathbb R}$, $f(x)=\sin\frac 1x$ if $x\ne 0$ and $f(x)=1$ if $x=0$, and let $g:{\mathbb R}^2\to {\mathbb R}$, $g(x,y)=f(x)$. Then $g\in {\rm wG}({\mathbb R}^2,{\mathbb R})\setminus {\rm sG}({\mathbb R}^2,{\mathbb R})$.
\end{example}

The following result shows that a strongly Gibson function $f:X\to Y$  can be everywhere discontinuous even in the case where $X=\mathbb R$.
 \begin{proposition}\label{2}
   Let $X$ and $Y$ be topological spaces, $X=A\sqcup B$, where $A$ and $B$ are dense in $X$, $y_1, y_2\in Y$, $y_1\ne y_2$ and $f:X\to Y$ be such a function that $f(x)=y_1$ if $x\in A$, and $f(x)=y_2$ if $x\in B$. Then $f\in {\rm sG}(X,Y)$.
\end{proposition}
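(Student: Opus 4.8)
The plan is to reduce the strong Gibson property to a single observation: \emph{$f$ assumes both of its values $y_1$ and $y_2$ on every nonempty open subset of $X$.} Indeed, if $V\subseteq X$ is open and nonempty, then density of $A$ gives a point $a\in A\cap V$ and density of $B$ gives a point $b\in B\cap V$, so that $\{y_1,y_2\}=\{f(a),f(b)\}\subseteq f(V)$. This is the only place where the hypotheses on $A$ and $B$ enter.

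To verify the strong Gibson property at an arbitrary point $x\in X$, I would fix $G\in{\mathcal G}(X)$ with $x\in\overline{G}$ and an open neighborhood $U$ of $x$. The set $G\cap U$ is open as an intersection of two open sets, and it is nonempty: since $x\in\overline{G}$ and $U$ is a neighborhood of $x$, the neighborhood $U$ must meet $G$. Applying the observation above to the nonempty open set $V=G\cap U$, I obtain $\{y_1,y_2\}\subseteq f(G\cap U)$.

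Finally, since $x$ lies in $A$ or in $B$, the value $f(x)$ equals $y_1$ or $y_2$; in either case $f(x)\in\{y_1,y_2\}\subseteq f(G\cap U)\subseteq\overline{f(G\cap U)}$. As $x$, $G$ and $U$ were arbitrary, this shows $f\in{\rm sG}(X,Y)$. There is essentially no obstacle here: the entire content is that density of both pieces forces $f$ to attain both values on every nonempty open set, and the condition $x\in\overline{G}$ is precisely what prevents $G\cap U$ from being empty. Notably, connectedness of $G$ plays no role, so the same argument in fact yields the (formally stronger) Gibson property as well.
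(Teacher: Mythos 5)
Your proof is correct and follows essentially the same route as the paper: density of $A$ and $B$ forces points of both sets into the nonempty open set $U\cap G$, hence $f(x)\in\overline{f(U\cap G)}$. Your version is marginally tidier in that you observe $f(x)\in\{y_1,y_2\}\subseteq f(G\cap U)$ directly (so no auxiliary neighborhood $V$ of $f(x)$ is needed), and your closing remark that connectedness of $G$ is never used --- so $f$ is in fact Gibson as well --- is accurate.
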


 \begin{proof} Fix $x\in X$ and $G\in{\mathcal G}(X)$ such that $x\in {\rm fr}\,G$. Choose an arbitrary open neighborhoods $U$ and $V$ of $x$ and $f(x)$ in $X$ and $Y$, respectively. Since $\overline{A}=\overline{B}=X$ and $U\cap G$ is open set, there exist $x'\in A\cap U\cap G$ and $x''\in B\cap U\cap G$. Then either $f(x')\in V$, or $f(x'')\in V$. Therefore, $f(x)\in\overline{f(U\cap G)}$.
 \end{proof}

\begin{proposition}\label{4}
  Let $X$ be a connected resolvable space, $Y$ be such a $T_1$-space that $|Y|\ge 2$. Then
  $${\rm sG}(X,Y)\setminus {\rm D}(X,Y)\ne\O.$$
\end{proposition}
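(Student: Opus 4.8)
The plan is to reuse the construction from Proposition \ref{2}, so that the strong Gibson property comes for free and only the failure of the Darboux property must be checked. Since $X$ is resolvable, I would fix a decomposition $X = A \sqcup B$ into two disjoint everywhere dense subsets. Because $|Y| \ge 2$, I can pick two distinct points $y_1, y_2 \in Y$, and define $f:X\to Y$ by $f(x)=y_1$ for $x\in A$ and $f(x)=y_2$ for $x\in B$. This is precisely the situation covered by Proposition \ref{2} (with $X=A\sqcup B$, $A$ and $B$ dense, and $y_1\ne y_2$), so that proposition immediately gives $f\in{\rm sG}(X,Y)$.

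It then remains to verify that $f\notin{\rm D}(X,Y)$. The key observation is that $X$ is connected, hence $X$ itself belongs to ${\mathcal C}(X)$; consequently, if $f$ were Darboux, the whole image $f(X)$ would have to be connected. But by construction $f(X)=\{y_1,y_2\}$, since both $A$ and $B$ are nonempty (indeed dense). I would then argue that a two-point subset of a $T_1$-space is disconnected: as $Y$ is $T_1$, the singletons $\{y_1\}$ and $\{y_2\}$ are closed in $Y$, so each is relatively open in the subspace $\{y_1,y_2\}$ as the complement of the other closed singleton. This exhibits $\{y_1,y_2\}=\{y_1\}\sqcup\{y_2\}$ as a separation into two nonempty relatively clopen sets, so $f(X)$ is disconnected.

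Putting these together, $f$ maps the connected set $X$ onto the disconnected set $\{y_1,y_2\}$, whence $f\notin{\rm D}(X,Y)$, while $f\in{\rm sG}(X,Y)$ by the previous step; thus $f\in{\rm sG}(X,Y)\setminus{\rm D}(X,Y)$ and this difference is nonempty. I do not expect any genuine obstacle here: the argument is a direct combination of resolvability (to supply the dense partition), Proposition \ref{2} (to secure the strong Gibson property), connectedness of $X$ and the $T_1$-separation of $Y$ (to disconnect the two-point image). The only mildly delicate point is the explicit use of the $T_1$ hypothesis to conclude that $\{y_1,y_2\}$ is not connected, which is where that assumption on $Y$ is genuinely needed.
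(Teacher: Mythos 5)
Your proof is correct and follows essentially the same route as the paper: resolvability gives the dense partition, Proposition \ref{2} gives the strong Gibson property, and the connectedness of $X$ together with the disconnectedness of the two-point image shows the Darboux property fails. The only difference is that you spell out, via the $T_1$ axiom, why $\{y_1,y_2\}$ is disconnected — a detail the paper leaves implicit.
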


   \begin{proof} Let $X_1$ and $X_2$ are disjoint dense sets in $X$ such that $X=X_1\cup X_2$. Choose distinct points $y_1,y_2\in Y$ and consider a function  $f:X\to Y$, $f(x)=y_1$ if $x\in X_1$, and $f(x)=y_2$ if $x\in X_2$. According to Proposition \ref{2}, $f\in {\rm sG}(X,Y)$. But $f(X)=\{y_1,y_2\}$ is not connected, then $f\not\in {\rm D}(X,Y)$.
\end{proof}

The following lemma will be useful.

\begin{lemma}\label{1}\cite[p.~136]{Ku2}
  Let $A$ and $B$ be such subsets of topological space $X$ that $A$ is connected, $A\cap B\ne\O$ and $A\setminus B\ne\O$. Then $A\cap{\rm fr}B\ne\O$.
\end{lemma}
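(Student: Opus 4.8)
The plan is to argue by contradiction. Suppose that $A\cap{\rm fr}\,B=\O$. The starting point is the standard decomposition of the ambient space into three pairwise disjoint pieces,
$X={\rm int}\,B\sqcup{\rm fr}\,B\sqcup(X\setminus\overline{B})$,
in which the first set ${\rm int}\,B$ and the third set $X\setminus\overline{B}$ are both open. Under the contradiction hypothesis $A$ avoids the middle piece entirely, so $A\subseteq{\rm int}\,B\cup(X\setminus\overline{B})$, and I would set $A_1=A\cap{\rm int}\,B$ and $A_2=A\cap(X\setminus\overline{B})$. By construction $A_1$ and $A_2$ are disjoint, each is relatively open in $A$, and together they cover $A$.

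Next I would verify that both $A_1$ and $A_2$ are nonempty, which is exactly where the two hypotheses enter. Pick $a\in A\cap B$, which is possible since $A\cap B\ne\O$. Then $a\in B\subseteq\overline{B}$, while $a\notin{\rm fr}\,B$ by the contradiction assumption applied to $a\in A$; hence $a\in{\rm int}\,B$, so $A_1\ne\O$. Symmetrically, pick $a'\in A\setminus B$, possible since $A\setminus B\ne\O$. Then $a'\notin B$ forces $a'\notin{\rm int}\,B$, and again $a'\notin{\rm fr}\,B$ because $a'\in A$; the decomposition therefore places $a'$ in $X\setminus\overline{B}$, so $A_2\ne\O$.

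Finally, $A_1$ and $A_2$ form a separation of $A$ into two nonempty, disjoint, relatively open sets, contradicting the connectedness of $A$. This contradiction yields $A\cap{\rm fr}\,B\ne\O$. I expect no deep obstacle here; the only point requiring care is the bookkeeping in the middle step, namely matching each hypothesis to the correct piece of the separation — $A\cap B\ne\O$ is what supplies a point of ${\rm int}\,B$, and $A\setminus B\ne\O$ is what supplies a point of $X\setminus\overline{B}$ — so that neither half of the separation collapses to the empty set.
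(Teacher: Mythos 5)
Your proof is correct and complete: the decomposition $X={\rm int}\,B\sqcup{\rm fr}\,B\sqcup(X\setminus\overline{B})$ into two open pieces plus the boundary, together with the observation that $A\cap B\ne\O$ forces a point of ${\rm int}\,B$ and $A\setminus B\ne\O$ forces a point of $X\setminus\overline{B}$, yields exactly the separation of $A$ that contradicts connectedness. Note that the paper offers no proof of its own here --- the lemma is quoted from Kuratowski's \emph{Topology}, Vol.~2, p.~136 --- and your argument is the standard one found there, so there is nothing to reconcile.
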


\begin{proposition}
  Let $X$ be a regular $T_1$-space and $Y$ be a topological space. Then $${\rm PC}(X,Y)\subseteq {\rm sG}(X,Y).$$
\end{proposition}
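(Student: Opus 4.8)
The plan is to show directly that an arbitrary $f\in{\rm PC}(X,Y)$ is strongly Gibson at each point. Fix $x\in X$, a set $G\in{\mathcal G}(X)$ with $x\in\overline{G}$, and an open neighborhood $U$ of $x$; I must verify that $f(x)\in\overline{f(G\cap U)}$, i.e.\ that every open neighborhood $V$ of $f(x)$ meets $f(G\cap U)$. If $x\in G$ this is immediate, since then $x\in G\cap U$ and $f(x)\in f(G\cap U)$. So the substantial case is $x\in{\rm fr}\,G$, and it is here that I intend to produce, for each such $V$, a point $z\in G\cap U$ with $f(z)\in V$.

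First I would use regularity to shrink the neighborhood: choose an open $U'\ni x$ with $\overline{U'}\subseteq U$. The purpose of this step is to guarantee at the end that the frontier of the neighborhood supplied by peripheral continuity lands inside $U$ and not merely inside $\overline{U}$. Next, since $x\in\overline{G}$ forces $G\ne\O$ while $x\notin G$, I can fix a point $g\in G$ with $g\ne x$; as $X$ is $T_1$, the singleton $\{g\}$ is closed, so $U''=U'\setminus\{g\}$ is still an open neighborhood of $x$. Now I apply peripheral continuity of $f$ at $x$ to the neighborhoods $U''$ and $V$: this yields an open $W$ with $x\in W\subseteq U''$ and $f({\rm fr}\,W)\subseteq V$.

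The decisive step is to locate a point of $G$ on ${\rm fr}\,W$ by invoking Lemma~\ref{1} with the connected set $A=G$ and the set $B=W$. Its two hypotheses hold by design: $G\cap W\ne\O$ because $x\in\overline{G}$ and $W$ is a neighborhood of $x$, while $G\setminus W\ne\O$ because $g\in G$ yet $g\notin U''\supseteq W$. Hence $G\cap{\rm fr}\,W\ne\O$; any $z$ in this intersection satisfies $f(z)\in f({\rm fr}\,W)\subseteq V$ and lies in $G$, and moreover $z\in{\rm fr}\,W\subseteq\overline{W}\subseteq\overline{U''}\subseteq\overline{U'}\subseteq U$, so $z\in G\cap U$. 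This is exactly the required witness, and since $V$ was arbitrary we conclude $f(x)\in\overline{f(G\cap U)}$.

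I expect the one genuinely delicate point to be securing $G\setminus W\ne\O$, since Lemma~\ref{1} is useless if $G$ happens to sit entirely inside the neighborhood $W$ that peripheral continuity hands us (over whose size we have no control). Carving the point $g$ out of the neighborhood, which is where the $T_1$ axiom enters, is precisely what blocks this degeneracy, whereas regularity is what confines the frontier to $U$; both hypotheses of the proposition are used exactly once and for these distinct reasons.
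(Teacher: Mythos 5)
Your proof is correct and takes essentially the same route as the paper's: shrink $U$ by regularity so that the frontier of the peripheral-continuity neighborhood lands in $U$, use the $T_1$ axiom to exclude a chosen point of $G$ from that neighborhood (guaranteeing $G\setminus W\ne\emptyset$), and then apply Lemma~\ref{1} with $A=G$, $B=W$ to produce a point of $G\cap{\rm fr}\,W$. The only cosmetic differences are that the paper obtains the shrunk neighborhood $U_1$ with $\overline{U}_1\subseteq U$ and $x_0\notin U_1$ in a single step from ``regular $T_1$'' rather than in two, and that you spell out the trivial case $x\in G$ which the paper leaves implicit by taking $x\in{\rm fr}\,G$.
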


\begin{proof}
  Let $f\in {\rm PC}(X,Y)$, $G\in{\mathcal G}(X)$, $x\in {\rm fr}G$. Fix any neighborhoods $U$ and $V$ of $x$ and $f(x)$ in $X$ and $Y$, respectively.  Choose an arbitrary $x_0\in G$. Since $X$ is regular $T_1$-space, there exists an open neighborhood $U_1$ of $x$ such that $\overline{U}_1\subseteq U$ and $x_0\not\in U_1$.
   By peripherally continuity of $f$ at $x$, there exists such an open neighborhood $W$ of $x$ that $W\subseteq U_1$ and $f({\rm fr}\,W)\subseteq V$. Obviously, $G\cap W\ne\O$ and $G\setminus W\ne\O$. Then Lemma \ref{1} implies that $G\cap {\rm fr}W\ne\O$. Take $x'\in G\cap {\rm fr}W$. Then $x'\in G\cap U$ and $f(x')\in V$. Therefore, $f\in {\rm sG}(X,Y)$.
\end{proof}

\begin{lemma}\label{21}
 Let $X$ be a separable resolvable space. Then there exists at most countable set $A\subseteq X$ such that $\overline{A}=\overline{X\setminus A}=X$.
\end{lemma}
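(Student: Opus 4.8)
The plan is to reformulate the conclusion and then build $A$ explicitly from a countable dense set together with a resolution of $X$. Observe first that the requirement $\overline{A}=\overline{X\setminus A}=X$ amounts to asking that $A$ be dense while simultaneously having empty interior (the latter being equivalent to density of $X\setminus A$). Since $X$ is separable, I fix a countable dense set $D\subseteq X$; since $X$ is resolvable, I fix a partition $X=X_1\sqcup X_2$ into two dense sets. I will produce the desired $A$ as a subset of $D$, so that it is automatically at most countable.

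The obvious candidate $A=D$ is dense, but it can fail to have dense complement precisely when $D$ swallows an open set, that is, when $W:={\rm int}\,D\ne\O$. This is the only place where resolvability is needed, and it is the crux of the argument. First I would note that $W\subseteq D$, so $W$ is a countable open set, and that the traces $X_1\cap W$ and $X_2\cap W$ are both dense in $W$ (any nonempty open $V\subseteq W$ meets each $X_i$ because $X_i$ is dense in $X$). Then I would set
$$A=(D\setminus W)\cup (X_1\cap W).$$
By construction $A\subseteq D$, and hence $A$ is at most countable.

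It remains to verify the two density statements. For density of $A$, I would take a nonempty open $U$: if $U\cap W\ne\O$, then $U\cap W$ is a nonempty open subset of $W$ and meets $X_1\cap W\subseteq A$; while if $U\cap W=\O$, then $\O\ne U\cap D\subseteq D\setminus W\subseteq A$. For density of $X\setminus A$, I would show that $A$ has empty interior: any nonempty open $U\subseteq A$ lies in $D$, hence in ${\rm int}\,D=W$, hence in $A\cap W=X_1\cap W\subseteq X_1$; but $X_2$ is dense, so $U\cap X_2\ne\O$, contradicting $U\subseteq X_1$. I expect the main obstacle to be the conceptual step of recognizing that all the trouble is concentrated on $W={\rm int}\,D$, and that applying the resolution only on this countable open set is exactly what repairs the failure of the naive choice $A=D$; once the construction is written down, the verifications are routine.
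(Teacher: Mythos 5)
Your proof is correct, but it takes a genuinely different (and more elementary) route than the paper's. Your key move is to localize the only possible failure of the naive choice $A=D$ to the open set $W={\rm int}\,D$ and repair it there using the resolution, setting $A=(D\setminus W)\cup(X_1\cap W)$; your verifications are exactly right (density of $A$ by cases on whether a nonempty open set meets $W$; density of $X\setminus A$ via ${\rm int}\,A=\emptyset$, using $A\subseteq D$, $A\cap W=X_1\cap W$, and the density of $X_2$). The paper instead considers the family $\mathcal U$ of all open sets in which $D\cap X_1$ or $D\cap X_2$ is dense, proves that $\bigcup\mathcal U$ is dense in $X$, extracts a maximal pairwise disjoint subfamily $\mathcal V$ (a maximality/Zorn-type step), and assembles $A=\bigsqcup_{V\in\mathcal V}(X_{i_V}\cap D\cap V)$ piecewise. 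Both arguments use only the same two ingredients (a countable dense $D$ and a resolution $X=X_1\sqcup X_2$) and both produce $A\subseteq D$; what yours buys is a shorter, explicit, choice-free construction, while the paper's patching argument is a more generic technique that does not require noticing that the obstruction is concentrated on ${\rm int}\,D$, at the cost of the maximal-family machinery.
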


\begin{proof} Let $X_1$, $X_2$ and $D$ be dense subsets of $X$ such that $X=X_1\sqcup X_2$ and $|D|\le\aleph_0$. Consider the system
$${\mathcal U}=\{U\in{\mathcal T}(X): U\subseteq \overline{D\cap X_i}\,\,\,\mbox{for an}\,\,\, i\in\{1,2\}\}.$$
We claim that $G=\cup\mathcal U$ is dense in $X$. Indeed, suppose that there exists a nonempty set $V\in{\mathcal T}(X)$ such that $V\cap G=\O$. Then $X_i\cap D$ is nowhere dense in $V$ for $i=1,2$, which implies that $D\cap V=(D\cap X_1\cap V)\cup (D\cap X_2\cap V)$ is nowhere dense in $V$. But $D\cap V$ is dense in $V$ since $D$ is dense in $X$, a contradiction.

Let ${\mathcal V}$ be a maximal subsystem of $\mathcal U$ which consists of disjoint elements. Note that $\cup{\mathcal V}$ is dense in $G$. For every $V\in{\mathcal V}$ there exists $i_V\in\{1,2\}$ such that $X_{i_V}\cap D$ is dense in $V$. Remark that $V\setminus (X_{i_V}\cap D)$ is dense in $V$ too. Put $A=\bigsqcup\limits_{V\in {\mathcal V}}(X_{i_V}\cap D\cap V)$. Since $A$ and $X\setminus A$ are dense in $G$, we have that $\overline{A}=\overline{X\setminus A}=X$. Moreover, $|A|\le \aleph_0$ since $A\subseteq D$.
\end{proof}

\begin{proposition}\label{3} Let $X$ be a separable resolvable Hausdorff space such that a completion to any countable set is a connected set, $Y$ be a $T_1$-space, $|Y|\ge 2$. Then
  $${\rm sG}(X,Y)\setminus {\rm PC}(X,Y)\ne\O.$$
\end{proposition}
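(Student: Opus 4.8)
The plan is to build the desired function explicitly, letting Proposition \ref{2} supply the strong Gibson property for free and then arranging a single point at which peripheral continuity breaks down. First I would invoke Lemma \ref{21}: since $X$ is separable and resolvable, there is an at most countable set $A\subseteq X$ with $\overline{A}=\overline{X\setminus A}=X$. Fixing two distinct points $y_1,y_2\in Y$ (available because $|Y|\ge 2$), I define $f:X\to Y$ by $f(x)=y_1$ for $x\in A$ and $f(x)=y_2$ for $x\in X\setminus A$. As $A$ and $X\setminus A$ are disjoint and dense, Proposition \ref{2} immediately yields $f\in {\rm sG}(X,Y)$, so the whole task reduces to showing $f\notin {\rm PC}(X,Y)$.

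The core geometric fact I would isolate as a claim is: \emph{if $W$ is a nonempty open set with ${\rm fr}\,W\subseteq A$, then $W$ is dense in $X$.} To prove it, set $C:=X\setminus A$, which is connected by hypothesis, being the complement of an at most countable set. Using the partition $X=W\sqcup {\rm fr}\,W\sqcup (X\setminus\overline{W})$ and the fact that $C\cap{\rm fr}\,W=\O$, the connected set $C$ is written as the disjoint union of the two relatively open pieces $C\cap W$ and $C\cap(X\setminus\overline{W})$. The first is nonempty since $C$ is dense and $W$ is open and nonempty, so connectedness of $C$ forces $C\cap(X\setminus\overline{W})=\O$, i.e. $C\subseteq\overline{W}$; as $C$ is dense this gives $\overline{W}=X$.

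With the claim in hand, the failure of peripheral continuity is quick. Since $X$ is Hausdorff and resolvability forces $|X|\ge 2$, I pick distinct $x_0,z\in X$ and disjoint open neighborhoods $U\ni x_0$, $U'\ni z$; then $U$ is a nonempty open set with $\overline{U}\cap U'=\O$, so $\overline{U}\ne X$. Because $A$ is dense, choose $x\in U\cap A$, so $f(x)=y_1$, and let $V=Y\setminus\{y_2\}$, which is open (as $Y$ is $T_1$) and contains $f(x)$. I then claim no admissible $W$ exists for this pair $(U,V)$: any open $W$ with $x\in W\subseteq U$ and $f({\rm fr}\,W)\subseteq V$ would force ${\rm fr}\,W\cap(X\setminus A)=\O$, i.e. ${\rm fr}\,W\subseteq A$, whence the claim gives $\overline{W}=X$; but $W\subseteq U$ implies $\overline{W}\subseteq\overline{U}\ne X$, a contradiction. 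Hence $f$ is not peripherally continuous at $x$, and therefore $f\in {\rm sG}(X,Y)\setminus {\rm PC}(X,Y)$.

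I expect the main obstacle to be the boundary claim, since it is exactly there that the connectedness-of-complements hypothesis is consumed; the delicate points are using the correct three-fold partition $X=W\sqcup{\rm fr}\,W\sqcup(X\setminus\overline{W})$ and invoking density of $C=X\setminus A$ to exclude the degenerate splitting $C\cap W=\O$. Everything else — the construction of $f$, the appeal to Proposition \ref{2}, and the Hausdorff production of a non-dense neighborhood — is routine.
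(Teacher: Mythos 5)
Your proof is correct and follows essentially the same route as the paper: the same function built from Lemma \ref{21} and Proposition \ref{2}, the same choice $V=Y\setminus\{y_2\}$, the same Hausdorff-produced non-dense neighborhood $U$, and the same partition $X=W\sqcup{\rm fr}\,W\sqcup(X\setminus\overline{W})$. The only cosmetic difference is that you apply the cocountable-connectedness hypothesis to $A$ itself (so $X\setminus A$ is connected, forcing any open $W$ with ${\rm fr}\,W\subseteq A$ to be dense), whereas the paper applies it to the countable set ${\rm fr}\,W$ and concludes directly that the connected set $X\setminus{\rm fr}\,W=W\sqcup(X\setminus\overline{W})$ is disconnected by two nonempty open pieces.
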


\begin{proof}
 It follows from Lemma \ref{21} that there exists at most countable set $A\subseteq X$ such that $\overline{A}=\overline{X\setminus A}=X$.
  Fix two distinct points $y_1,y_2\in Y$ and consider a function $f:X\to Y$, $f(x)=y_1$ if $x\in A$, and $f(x)=y_2$ if $x\in X\setminus A$. According to Proposition \ref{2}, $f\in {\rm sG}(X,Y)$.

We will show that $f\not\in {\rm PC}(X,Y)$. Fix $x\in A$ and let $V=Y\setminus \{y_2\}$. Since $X$ is Hausdorff, there exists an open neighborhood $U$ of $x$ such that $X\setminus \overline{U}\ne\O$. Assume that there exists an open neighborhood $W$ of $x$ such that $W\subseteq U$ and $f({\rm fr}\, W)\subseteq V$. Then ${\rm fr}\, W\subseteq A$, which implies that $|{\rm fr}\, W|\le \aleph_0$. Notice that $X\setminus {\rm fr}\, W=W\sqcup (X\setminus \overline{W})$. The last equality implies a contradiction, because $X\setminus {\rm fr}\, W$ is connected and $W\sqcup (X\setminus \overline{W})$ is not since $W$ and $X\setminus \overline{W}$ are open nonempty sets. Thus, $f$ is not peripherally continuous at $x$.
\end{proof}

\section{On Gibson Baire-one functions}

In this section we show that the Gibson property is equivalent to the almost continuity (in the sense of Husain) and prove that every Gibson Baire-one function from a Baire space  to a metric space is continuous.

\begin{definition}
{\rm  A function $f:X\to Y$ is called {\it almost continuous (in the sense of Husain)} if $f^{-1}(W)\subseteq {\rm int}\overline{f^{-1}(W)}$ for any $W\in{\mathcal T}(Y)$.}
\end{definition}

\begin{proposition}\label{8}
  Let $X$ and $Y$ be topological spaces. Then $f:X\to Y$ is a Gibson function if and only if $f$ is almost continuous.
\end{proposition}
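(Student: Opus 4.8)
The plan is to prove the equivalence by unwinding both definitions and showing they express the same localization condition. Recall that $f$ is Gibson means: for every $x\in X$ and every $G\in{\mathcal T}(X)$ with $x\in\overline{G}$, we have $f(x)\in\overline{f(G)}$. Almost continuity means: for every open $W\subseteq Y$, $f^{-1}(W)\subseteq\operatorname{int}\overline{f^{-1}(W)}$. The natural bridge between them is the observation that $f(x)\in\overline{f(G)}$ fails exactly when some open neighborhood $V$ of $f(x)$ misses $f(G)$, i.e. $G\cap f^{-1}(V)=\emptyset$; and this is the kind of statement that connects an open set $W=V$ in $Y$ to the structure of its preimage.

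First I would prove the forward direction (Gibson $\Rightarrow$ almost continuous) by contraposition. Suppose almost continuity fails: there is an open $W\subseteq Y$ and a point $x\in f^{-1}(W)$ with $x\notin\operatorname{int}\overline{f^{-1}(W)}$. Setting $A=f^{-1}(W)$, the condition $x\notin\operatorname{int}\overline{A}$ says that every neighborhood of $x$ meets $X\setminus\overline{A}$; equivalently, $x\in\overline{X\setminus\overline{A}}$. Now take $G=X\setminus\overline{A}$, which is open, and observe $x\in\overline{G}$. On $G$ we have $f(G)\cap W=\emptyset$ because $G\cap A=\emptyset$, so $f(G)\subseteq Y\setminus W$, a closed set. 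Hence $\overline{f(G)}\subseteq Y\setminus W$, but $f(x)\in W$, so $f(x)\notin\overline{f(G)}$, contradicting the Gibson property. This gives the implication.

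For the converse (almost continuous $\Rightarrow$ Gibson), I would again argue by contraposition. Suppose $f$ is not Gibson: there exist $x$ and an open $G$ with $x\in\overline{G}$ but $f(x)\notin\overline{f(G)}$. Then some open neighborhood $V$ of $f(x)$ satisfies $V\cap f(G)=\emptyset$, so $G\cap f^{-1}(V)=\emptyset$, giving $G\subseteq X\setminus f^{-1}(V)$. Taking closures, $\overline{G}\subseteq\overline{X\setminus f^{-1}(V)}$. Since $x\in\overline{G}$ we get $x\in\overline{X\setminus f^{-1}(V)}$, which means $x\notin\operatorname{int}\bigl(f^{-1}(V)\bigr)$ — but actually I need the complement of the closure, so let me instead note $G$ is open and disjoint from $f^{-1}(V)$, hence $G\subseteq X\setminus\overline{f^{-1}(V)}$ is false in general; the clean route is: $x\in\overline{G}$ and $G\cap f^{-1}(V)=\emptyset$ force $x\notin\operatorname{int}\overline{f^{-1}(V)}$, because the open set $G$ witnesses that every neighborhood of $x$ contains points outside $f^{-1}(V)$, indeed outside its closure is not automatic, so I take $W=V$ and observe $x\in f^{-1}(W)$ while the open $G\subseteq X\setminus f^{-1}(W)$ with $x\in\overline{G}$ shows $x$ is not interior to $\overline{f^{-1}(W)}$. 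Thus almost continuity fails.

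The main obstacle is being careful with the two levels of closure/interior that appear in the definition of almost continuity; the asymmetry between $\overline{A}$ and $\operatorname{int}\overline{A}$ must be tracked precisely so that the open set $G$ produced from a Gibson failure genuinely witnesses a failure of $x\in\operatorname{int}\overline{f^{-1}(W)}$, and conversely. The key identity to keep in hand throughout is that for an open set $G$ disjoint from $A$, one has $x\in\overline{G}$ implies $x\notin\operatorname{int}\overline{A}$, since $G$ is an open set through every neighborhood of $x$ lying in the complement of $A$ and hence preventing $x$ from being interior to $\overline{A}$. Once this elementary topological lemma is isolated, both directions follow by choosing $W$ (resp. $G$) appropriately and the proof is short.
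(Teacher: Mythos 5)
Your proposal has the right skeleton and, modulo phrasing, it is the paper's own proof: your forward direction is exactly the paper's argument (take $G=X\setminus\overline{f^{-1}(W)}$, note $f(G)\subseteq Y\setminus W$, and contradict the Gibson property at $x$), and your converse is the contrapositive of the paper's direct argument, which uses that $f^{-1}(W)$ is dense in $U={\rm int}\,\overline{f^{-1}(W)}$ and hence meets the nonempty open set $U\cap G$. So there is no difference in method, only in logical packaging.

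However, the converse as you wrote it contains a genuine wobble. You dismiss the statement ``$G$ is open and disjoint from $f^{-1}(V)$, hence $G\subseteq X\setminus\overline{f^{-1}(V)}$'' as ``false in general,'' and later hedge that landing ``outside its closure is not automatic.'' In fact this statement is true, and it is precisely the one-line fact your key lemma needs: if $p\in G\cap\overline{A}$, then $G$ is an open neighborhood of $p$ disjoint from $A$, contradicting $p\in\overline{A}$; hence an open set disjoint from $A$ is disjoint from $\overline{A}$. From this, $G\subseteq X\setminus\overline{A}$ gives $\overline{G}\subseteq\overline{X\setminus\overline{A}}=X\setminus{\rm int}\,\overline{A}$, so $x\in\overline{G}$ forces $x\notin{\rm int}\,\overline{A}$, which is your lemma with a complete proof. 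The substitute justification you offer instead --- that every neighborhood of $x$ contains points outside $f^{-1}(V)$ --- is not sufficient by itself, because $\overline{A}$ may well contain points outside $A$; it is the openness of $G$ that closes this gap, via exactly the fact you rejected. Once that line is restored, your argument is correct and coincides with the paper's.
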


\begin{proof}
  {\sc Necessity.} Let $f:X\to Y$ be a Gibson function, $W\in{\mathcal T}(Y)$ and $x\in f^{-1}(W)$. Denote $G=X\setminus \overline{f^{-1}(W)}$ and assume that $x\in\overline{G}$. Since $f$ is Gibson at $x$, $f(G)\cap W\ne\O$. From the other side, it is easily seen that $f(G)\subseteq Y\setminus W$, which implies a contradiction.

  {\sc Sufficiency.} Choose any $G\in {\mathcal T}(X)$, $x\in {\rm fr}\,G$ and an open neighborhood $W$ of $f(x)$ in $Y$. Denote $U={\rm int}\overline{f^{-1}(W)}$. Since $x\in f^{-1}(W)\subseteq U$, $U\cap G\ne\O$. Notice that $f^{-1}(W)$ is dense in $U\cap G$. Then there exists an $x'\in U\cap G$ such that $f(x')\in W$. Hence, $f$ is Gibson at $x$.
\end{proof}

\begin{proposition}\label{9}
  Let $X$ be a topological space, $Y$ be a regular space, $f:X\to Y$ be a Gibson function with $D(f)\ne\O$. Then there exists an open nonempty set $U\subseteq X$ such that $D(f|_U)=U$.
\end{proposition}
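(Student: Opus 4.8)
The plan is to first reduce the statement to showing that the discontinuity set has nonempty interior. Indeed, continuity is a local property, so for an open set $U$ the set $U$ is a neighbourhood of each of its points, and $f$ is continuous at $x\in U$ if and only if $f|_U$ is continuous at $x$; hence $D(f|_U)=D(f)\cap U$. Thus the required equality $D(f|_U)=U$ is equivalent to $U\subseteq D(f)$, and the whole proposition amounts to: a Gibson function into a regular space whose discontinuity set is nonempty must satisfy $\mathrm{int}\,D(f)\neq\varnothing$. The main tool will be Proposition~\ref{8}, which lets me replace ``Gibson'' by ``almost continuous'', i.e. $f^{-1}(W)\subseteq\mathrm{int}\,\overline{f^{-1}(W)}$ for every open $W\subseteq Y$; informally, preimages of open sets are dense in their own interiors.

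First I would fix a point $x_0\in D(f)$ and an open set $V\ni f(x_0)$ witnessing the discontinuity, so that $f(W)\not\subseteq V$ for every neighbourhood $W$ of $x_0$. Using regularity of $Y$ I insert two nested levels, $f(x_0)\in V_2\subseteq\overline{V_2}\subseteq V_1\subseteq\overline{V_1}\subseteq V$, and consider the preimages $A=f^{-1}(V_2)$ (values close to $f(x_0)$) and $E=f^{-1}(Y\setminus\overline{V_1})$ (values far away). Almost continuity gives $x_0\in A\subseteq\mathrm{int}\,\overline A=:U_2$, so $A$ is dense in the open neighbourhood $U_2$ of $x_0$. On the other hand, since $\overline{V_1}\subseteq V$, every neighbourhood $W$ of $x_0$ contains a point $z$ with $f(z)\notin V$, hence $f(z)\notin\overline{V_1}$ and $z\in E$; therefore $x_0\in\overline E$ and in particular $U_2\cap E\neq\varnothing$. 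Applying almost continuity again, $E$ is dense in the open set $U_E:=\mathrm{int}\,\overline E\supseteq E$, whence $O:=U_2\cap U_E$ is a nonempty open set on which both $A$ and $E$ are dense.

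The heart of the argument, and the step I expect to be the main obstacle, is to promote ``discontinuity points are dense in $O$'' to ``every point of $O$ is a discontinuity point''. The key observation is that $f(A)\subseteq V_2$ and $f(E)\subseteq Y\setminus\overline{V_1}$ lie in the disjoint open sets $V_1$ and $Y\setminus\overline{V_1}$, and that for an arbitrary $x\in O$ the value $f(x)$ falls into one of two complementary open regions. If $f(x)\in\overline{V_2}\subseteq V_1$, then continuity of $f$ at $x$ would produce a neighbourhood $W\subseteq O$ with $f(W)\subseteq V_1$, forcing $W\cap E=\varnothing$ and contradicting the density of $E$ in $O$. If instead $f(x)\notin\overline{V_2}$, then $f(x)\in Y\setminus\overline{V_2}$ and continuity would produce $W\subseteq O$ with $f(W)\subseteq Y\setminus\overline{V_2}$, forcing $W\cap A=\varnothing$ and contradicting the density of $A$ in $O$. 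These two cases are exhaustive, so every point of $O$ is a point of discontinuity, i.e. $O\subseteq D(f)$.

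Taking $U=O$ then completes the proof, since for this open $U$ one has $D(f|_U)=D(f)\cap U=U$. The only real care needed is the bookkeeping of the three nested levels $V_2,V_1,V$: the inclusion $\overline{V_1}\subseteq V$ is what transports the ``far'' set $E$ back to $x_0$, while the separation $\overline{V_2}\subseteq V_1$ is precisely what makes the final value-dichotomy on $f(x)$ exhaustive and each alternative contradict the density of $E$, respectively $A$.
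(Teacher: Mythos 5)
Your proof is correct and follows essentially the same route as the paper's: both pass through Proposition~\ref{8} to replace the Gibson property by almost continuity, take the preimage of a small neighbourhood of $f(x_0)$ and the preimage of the complement of a slightly larger closed neighbourhood, and produce $U$ as the intersection of the interiors of the closures of these two preimages, on which both preimages are dense while their values are separated by disjoint open sets, so that no point of $U$ can be a continuity point. If anything, your bookkeeping is tighter than the paper's: the paper applies regularity only once (choosing $\overline{W}_1\subseteq W$ and setting $W_2=Y\setminus\overline{W}$), so its nonemptiness step needs values outside $\overline{W}$ where discontinuity only supplies values outside $W$, and its trichotomy's third case ($f(x)\in\overline{W}\setminus W_1$) tacitly assumes $f(x)\notin\overline{W}_1$ in order to find a neighbourhood of $f(x)$ missing $W_1$; your extra nested neighbourhood $V_2\subseteq\overline{V_2}\subseteq V_1\subseteq\overline{V_1}\subseteq V$ and the exhaustive dichotomy $f(x)\in\overline{V_2}$ versus $f(x)\notin\overline{V_2}$ close exactly these small gaps.
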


\begin{proof}
  Suppose that $f$ is discontinuous at a point $x_0$. Then there exists  an open neighborhood $W$ of $f(x_0)$ such that $V\cap f^{-1}(Y\setminus W)\ne\O$ for any open neighborhood $V$ of $x_0$ in $X$. It follows from the regularity of $Y$ that there exists  an open neighborhood $W_1$ of $f(x_0)$ such that $\overline{W}_1\subseteq W$. Denote $W_2=Y\setminus \overline{W}$, $G_i={\rm int}\overline{f^{-1}(W_i)}$ for  $i=1,2$ and prove that $U=G_1\cap G_2$ is nonempty. Indeed, according to Proposition \ref{8}, the set $G_1$ is a neighborhood of $x_0$. Then $G_1\cap f^{-1}(W_2)\ne\O$. Moreover, Proposition \ref{8} implies that $f^{-1}(W_2)\subseteq G_2$. Therefore, $U\supseteq G_1\cap f^{-1}(W_2)\ne\O$.

  Now we show that $C(f|_U)=\O$. Assume  $f|_U$ is continuous at $x\in U$. If $f(x)\in W_1$ ($f(x)\in W_2$) then there exists  a neighborhood $V$ of $x$ in $U$ such that $f(V)\subseteq W_1$ ($f(V)\subseteq W_2$), which contradicts the density of  $f^{-1}(W_2)$ ($f^{-1}(W_1)$) in $U$. If $f(x)\in \overline{W}\setminus W_1$ then there exist open neighborhoods  $W'$ of $f(x)$ and $V$ of $x$ in $U$ such that $W'\cap W_1=\O$ and $f(V)\subseteq W'$. But $V\cap f^{-1}(W_1)\ne\O$, a contradiction. Hence, $x\in D(f|_U)$.
\end{proof}

Taking into account that every Baire-one function $f:X\to Y$, where $X$ is a Baire space and $Y$ is a metric space, is pointwise discontinuous, the result below immediately follows from \cite[Theorem 2.1]{Ber} (see also \cite[Corollary 1]{BorsDob}).
We present a proof here for convenience of the reader.

\begin{theorem}
Let $X$ be a Baire space and $Y$ is a metric space. Then every Gibson Baire-one function $f:X\to Y$ is continuous.
\end{theorem}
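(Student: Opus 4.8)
The plan is to combine the two structural facts already established: that Gibson functions are exactly the almost continuous functions (Proposition~\ref{8}), and that a discontinuous Gibson function into a regular space must be \emph{everywhere} discontinuous on some nonempty open set (Proposition~\ref{9}). Since $Y$ is a metric space it is in particular regular, so both propositions apply. The strategy is to argue by contradiction: assume $f$ is discontinuous somewhere, derive from Proposition~\ref{9} a nonempty open set $U\subseteq X$ on which $f$ is discontinuous at every point, and then contradict the fact that a Baire-one function on a Baire space is pointwise discontinuous.

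First I would invoke the standard fact (referenced in the paragraph preceding the theorem) that every Baire-one function $f:X\to Y$ from a Baire space $X$ into a metric space $Y$ is pointwise discontinuous, i.e. $C(f)$ is dense in $X$. The subtlety is that Proposition~\ref{9} produces an open set $U$ with $D(f|_U)=U$, and I need pointwise discontinuity not of $f$ but of the restriction $f|_U$. To bridge this I would observe that $U$, being open in the Baire space $X$, is itself a Baire space, and that $f|_U$ is again Baire-one (a restriction of a pointwise limit of continuous functions to an open subset is a pointwise limit of the restricted continuous functions). Hence $f|_U$ is pointwise discontinuous, so $C(f|_U)$ is dense in $U$; in particular $C(f|_U)\neq\varnothing$.

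The contradiction is then immediate. Assuming $D(f)\neq\varnothing$, Proposition~\ref{9} gives a nonempty open $U$ with $D(f|_U)=U$, which says $C(f|_U)=\varnothing$; but the previous paragraph forces $C(f|_U)$ to be dense and hence nonempty. This contradiction shows $D(f)=\varnothing$, i.e. $f$ is continuous.

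The main obstacle I anticipate is not conceptual but a matter of correctly transferring the two auxiliary hypotheses to the restriction $f|_U$: I must check that openness of $U$ genuinely yields that $U$ is a Baire space and that the Baire-one property descends to $f|_U$. Both are routine but essential, since Proposition~\ref{9} is phrased in terms of the restriction while the pointwise-discontinuity input is naturally stated for the ambient function. Once these transfers are justified, the proof is a clean two-line contradiction, and everything reduces to quoting Propositions~\ref{8} and~\ref{9} together with the known pointwise discontinuity of Baire-one functions.
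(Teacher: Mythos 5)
Your proof is correct and follows essentially the same route as the paper: assume discontinuity, invoke Proposition~\ref{9} to obtain a nonempty open $U$ with $D(f|_U)=U$, note that $f|_U$ is Baire-one and $U$ (being open in a Baire space) is a Baire space, and derive a contradiction with the fact that Baire-one functions into metric spaces have meager discontinuity sets — the paper phrases this last step via Kuratowski's theorem that $D(f|_U)$ is of first category in $U$ while $U$ is of second category in itself, which is the same fact you use in the form of pointwise discontinuity.
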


\begin{proof}
 Assume there exists a Gibson function $f\in {\rm B}_1(X,Y)$ which is discontinuous. Then Proposition \ref{9} implies that the restriction $f|_U$ on an open nonempty set $U\subseteq X$ is everywhere discontinuous. Furthermore, $f|_U$ is of the first Baire class and according to  \cite[p.~405]{Ku1} the set $D(f|_U)$ is of the first category in $U$. From the other hand, $X$ is a Baire space. Then $U$ is of the second category in itself, a contradiction.
\end{proof}

\section{Segmentary connected functions}

Let $X$ be a vector space. For $a, b\in X$ we set
$$[a,b]=\{(1-t) a+t b:t\in[0,1]\}, \qquad [a,b)=\{(1-t) a+t b:t\in[0,1)\}.$$
The set $[a,b]$ is said to be {\it a segment in $X$}.

\begin{definition}
  Let   $Y$ be a topological space. A function $f:X\to Y$ is called {\tt segmentary connected}, $f\in{\rm S}(X,Y)$, if $f([a,b])$ is a connected set for any segment $[a,b]\subseteq X$.
\end{definition}

The definition immediately implies that ${\rm D}(X,Y)\subseteq {\rm S}(X,Y)$ for a topological vector space $X$ and for a topological space $Y$. It is not difficult to verify that ${\rm D}(X,Y)={\rm S}(X,Y)$ if $X$ is a connected subset of $\mathbb R$ and $Y$ is a topological space.

\begin{proposition}\label{15}
  ${\rm S}({\mathbb R}^2,[0,1])\setminus {\rm wG}({\mathbb R}^2,[0,1])\ne\O$.
\end{proposition}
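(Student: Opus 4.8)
The plan is to construct a single function $f:{\mathbb R}^2\to[0,1]$ that maps every segment onto a connected set yet fails the weak Gibson property at the origin $O=(0,0)$. The mechanism of failure is easy to arrange: I will produce an open connected set $G$ with $O\in\overline G$ on which $f\equiv 1$, while setting $f(O)=0$. Then $\overline{f(G)}=\{1\}$ does not contain $f(O)=0$, so $f\notin{\rm wG}({\mathbb R}^2,[0,1])$. All the work lies in reconciling this with segmentary connectedness, and in particular with those segments that run from $O$ into $\overline G$.

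First I would fix the geometry of $G$. Take a spiral converging to $O$, say the curve $\gamma(s)=s(\cos\frac1s,\sin\frac1s)$ for $0<s\le 1$, and let $G$ be an open tube of rapidly shrinking radius $\rho(s)\to 0$ around $\gamma$. Then $G$ is open and connected and $O\in\overline G$. The crucial geometric fact, which I would isolate as a lemma, is that if $\rho$ decays fast enough (faster than the spacing along any line through $O$ of consecutive intersections of that line with $\gamma$), then \emph{no non-degenerate segment containing $O$ is contained in $\overline G$}; equivalently, every non-degenerate segment containing $O$ meets the complementary open set $R={\mathbb R}^2\setminus\overline G$ in a sub-segment of positive length.

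Next I would define the values of $f$. Put $f\equiv 1$ on $\overline G\setminus\{O\}$ and $f(O)=0$. On the open set $R$ I would build, by transfinite recursion of length $\mathfrak c$ over all pairs consisting of a non-degenerate segment $S\subseteq R$ and a value $t\in[0,1]$, a function $f|_R:R\to[0,1]$ that attains every value $t$ on every such $S$; since at each step fewer than $\mathfrak c$ points of $R$ have been used and each segment has $\mathfrak c$ points, a fresh point is always available, so the construction yields $f(S)=[0,1]$ for every non-degenerate segment $S\subseteq R$.

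Finally I would verify $f\in{\rm S}({\mathbb R}^2,[0,1])$ by cases for an arbitrary non-degenerate segment $[a,b]$. If $[a,b]\cap R$ has nonempty relative interior, then it contains a non-degenerate sub-segment lying in $R$, whose image is already all of $[0,1]$; since the range of $f$ is contained in $[0,1]$, we get $f([a,b])=[0,1]$, which is connected. Otherwise $[a,b]\cap R$ has empty relative interior, which (as $R$ is open) forces $[a,b]\subseteq\overline G$; by the geometric lemma such a segment cannot contain $O$, so $f$ is identically $1$ on it and $f([a,b])=\{1\}$ is connected. The main obstacle is the geometric lemma, namely choosing the tube radius small enough that no positive-length segment through $O$ survives inside $\overline G$, so that radial segments issuing from $O$ are always ``filled'' through $R$; once this is in place, the transfinite construction of the filling function and the case analysis above are routine.
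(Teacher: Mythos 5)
Your construction is sound, but it takes a genuinely different route from the paper's, so a comparison is in order. The paper also uses spiral geometry, but more softly: it takes two disjoint logarithmic spirals $S_1,S_2\subseteq{\mathbb R}^2$ (in polar coordinates $\varrho=e^{\varphi}$ and $\varrho=2e^{\varphi}$) accumulating at $p_0=(0,0)$, chooses by Urysohn's lemma a continuous $\alpha:{\mathbb R}^2\setminus\{p_0\}\to[0,1]$ with $\alpha^{-1}(0)=S_1$ and $\alpha^{-1}(1)=S_2$, and sets $f=\alpha$ off $p_0$ and $f(p_0)=1$. Segmentary connectedness is then immediate from the intermediate value theorem: a segment avoiding $p_0$ has connected image by continuity, while a segment through $p_0$ meets both spirals (every ray from the origin crosses each spiral infinitely often), so its image is all of $[0,1]$; the weak Gibson failure is witnessed by the component $G$ of a point of $S_1$ in the open set $f^{-1}([0,\frac 12))$, which contains $S_1$, so that $p_0\in\overline G$ while $\overline{f(G)}\subseteq[0,\frac 12]$ misses $f(p_0)=1$. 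You instead make the Gibson failure trivial by design ($f\equiv 1$ on the tube, $f(O)=0$) and shift all the work into (a) the quantitative tube lemma and (b) a transfinite filling function on $R$. Both steps can be completed: for (a), a ray of direction $\theta$ meets your spiral at radii $s_k=1/(\theta+2\pi k)$, whose consecutive gaps are of order $s_k^2$, so taking $\rho(s)=s^3$ a routine estimate shows that a ray point at radius $r$ can lie in $\overline G$ only if $r$ is within $O(r^3)$ of some $s_k$, whence every radial segment $[O,c]$ escapes $\overline G$; and (b) is the standard cardinality recursion. So your proof is valid once (a) is written out in full. The trade-off is instructive: the paper's example needs no metric estimates and no choice-dependent construction, and, being continuous except at the single point $p_0$, it is of the first Baire class — a feature the paper exploits later, when it cites Proposition~\ref{15} to show that condition (3) of Theorem~\ref{5} does not imply condition (1) for Baire-one functions. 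Your example cannot serve that purpose: a function mapping every non-degenerate segment of the nonempty open set $R$ onto $[0,1]$ is nowhere continuous on $R$, hence not Baire-one. What your route buys is modularity — the failure of the weak Gibson property requires no argument at all, whereas the paper must identify a suitable connected component of a preimage.
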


\begin{proof}
  Consider two logarithmic spirals  $S_1\subseteq {\mathbb R}^2$ and $S_2\subseteq {\mathbb R}^2$, which in polar coordinates can be written as $\varrho=e^{\varphi}$ and $\varrho=2e^{\varphi}$, respectively. Set $p_0=(0,0)$. Since $S_1$ and $S_2$ are closed in $X={\mathbb R}^2\setminus\{p_0\}$  and disjoint, there exists such a continuous function $\alpha:X\to[0,1]$ that $S_i=\alpha^{-1}(i-1)$, $i=1,2$. Let $f(p)=\alpha(p)$ if $p\ne p_0$, and $f(p_0)=1$.

  We prove that $f\in {\rm S}({\mathbb R}^2,[0,1])$. Indeed, if $[a,b]$ is such a segment in ${\mathbb R}^2$ that $p_0\not\in[a,b]$, then the restriction $f|_{[a,b]}$ is continuous, consequently, $f([a,b])$ is a connected set. Now let $p_0\in[a,b]$. It is obvious that there exist $p_1\in S_1\cap [a,b]$ and $p_2\in S_2\cap [a,b]$. Since $f(p_1)=0$, $f(p_2)=1$ and $f|_{[p_1,p_2]}$ is continuous, $f([p_1,p_2])=[0,1]$. Taking unto account that $f([p_1,p_2])\subseteq f([a,b])\subseteq [0,1]$, we have $f([a,b])=[0,1]$. Thus, $f$ is segmentary connected.

 We show now that $f\not\in {\rm wG}({\mathbb R}^2,[0,1])$. Let $G_0=f^{-1}([0,\frac 12))$. Choose any point $p\in S_1$ and consider a component $G$ of  $p$ in $G_0$. Then $G\in{\mathcal G}({\mathbb R}^2)$. Since $p_0\in\overline{S}_1$ and $S_1\subseteq G$, $p_0\in \overline{G}$. But $f(p_0)=1\not\in\overline{f(G)}\subseteq [0,\frac 12]$. Therefore, $f$ does not satisfy the weak Gibson property at $p_0$.
\end{proof}

We will need the following two auxiliary facts.

\begin{lemma}\label{12}
  Let $X$ be a topological space, $f:X\to Y$ be a function, $A\subseteq X$, $A=\bigcup\limits_{i\in I}A_i$,  where $f(A_i)$ is connected for every $i\in I$, and for each two points $x_1,x_2\in A$ there exists a finite set of indexes $i_1,\dots,i_n\in I$ such that $x_1\in A_{i_1}$, $x_2\in A_{i_n}$ and $A_{i_k}\cap A_{i_{k+1}}\ne\O$ for $1\le k< n$. Then $f(A)$ is a connected set.
\end{lemma}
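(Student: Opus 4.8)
The plan is to reduce the claim to the classical principle that a union of connected sets in which any two members can be joined by a finite chain of members with consecutive ones intersecting is itself connected, and to apply that principle to the family $\{f(A_i):i\in I\}$ rather than to $\{A_i\}$. First I would record the two elementary facts that make this transfer work: since $A=\bigcup_{i\in I}A_i$, we have $f(A)=\bigcup_{i\in I}f(A_i)$; and whenever $A_i\cap A_j\neq\emptyset$, choosing a point $x$ in the intersection yields $f(x)\in f(A_i)\cap f(A_j)$, so $f(A_i)\cap f(A_j)\neq\emptyset$. Thus the hypothesis supplies, for any two points of $A$, a finite chain of indices whose image sets $f(A_{i_k})$ are connected and satisfy $f(A_{i_k})\cap f(A_{i_{k+1}})\neq\emptyset$.

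Next I would argue by contradiction. Suppose $f(A)$ is disconnected, so there are sets $U,V$, open in $f(A)$, with $f(A)=U\sqcup V$ and $U,V\neq\emptyset$. Because each $f(A_i)$ is connected and contained in $U\cup V$, it lies entirely in $U$ or entirely in $V$; this splits the index set as $I=I_U\sqcup I_V$, where $i\in I_U$ means $f(A_i)\subseteq U$ and $i\in I_V$ means $f(A_i)\subseteq V$. Since $U$ and $V$ are both nonempty and $f(A)=\bigcup_{i\in I}f(A_i)$, both $I_U$ and $I_V$ are nonempty.

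Finally I would use the chain hypothesis to reach the contradiction. Pick $i^{*}\in I_U$ and $j^{*}\in I_V$ and choose points $x_1\in A_{i^{*}}$, $x_2\in A_{j^{*}}$; the hypothesis gives indices $i_1,\dots,i_n$ with $x_1\in A_{i_1}$, $x_2\in A_{i_n}$ and $A_{i_k}\cap A_{i_{k+1}}\neq\emptyset$ for $1\le k<n$. Then $f(x_1)\in f(A_{i_1})\cap U$ forces $i_1\in I_U$, and likewise $i_n\in I_V$. But consecutive sets along the chain have intersecting images, and two connected image sets that meet cannot lie in different parts of the separation $U\sqcup V$ (a common point would lie in $U\cap V=\emptyset$); hence $i_1,\dots,i_n$ all belong to the same part, contradicting $i_1\in I_U$ and $i_n\in I_V$. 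Therefore $f(A)$ is connected. I expect the only delicate point to be the bookkeeping that converts the point-indexed chain provided by the hypothesis into the statement about the two parts $I_U,I_V$; the entire topological content is carried by the elementary fact that a connected set cannot be split by a separation.
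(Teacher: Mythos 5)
Your proof is correct, and it rests on the same key observation as the paper's: since $f(A)=\bigcup_{i\in I}f(A_i)$ and any common point of $A_i\cap A_j$ maps to a common point of $f(A_i)\cap f(A_j)$, the chain condition transfers from the sets $A_i$ to their images. The difference is in how that transfer is exploited. The paper's (two-line) proof notes that the union of images along a transferred chain is a connected subset of $f(A)$ containing any two prescribed points of $f(A)$, and then invokes the classical criterion, cited from Kuratowski, that a set in which every pair of points lies in a connected subset is connected. You instead run a direct separation argument on $f(A)$: each connected image $f(A_i)$ lies entirely in one side of a putative separation $U\sqcup V$, and a chain joining a point of $U$ to a point of $V$ would have to switch sides at some consecutive pair with intersecting images, which is impossible. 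Your route is self-contained --- it re-proves, in one stroke, both classical facts the paper quotes (connectedness of chained unions of connected sets, and the two-point criterion) --- at the cost of some index bookkeeping; the paper's version is shorter but leans on the cited theorem. One microscopic point: when you split $I$ as $I_U\sqcup I_V$, an index with $A_i=\emptyset$ would belong to both classes, so the split is not literally a partition; this is harmless, since the indices you actually use (those witnessing nonemptiness of $U$ and $V$, and those occurring in a chain) all carry nonempty sets.
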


\begin{proof}
  It is easy to see that each two points $y_1, y_2\in f(A)$ are contained in a connected subset of $f(A)$. Thus, $f(A)$ is a connected set by  \cite[p.~141]{Ku2}.
\end{proof}

\begin{lemma}\label{7}
  Let $X$ be a locally connected space, $\mathcal B$ be a base of open connected sets in $X$, $Y$ be a topological space and $f:X\to Y$. If $f(U)$ is connected for each $U\in\mathcal B$, then $f\in{\rm wD}(X,Y)$.
\end{lemma}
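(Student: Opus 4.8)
The plan is to reduce the statement to Lemma \ref{12} by covering a given open connected set with basic connected open pieces and then verifying the chaining hypothesis of that lemma via connectedness.

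First I would fix $G\in{\mathcal G}(X)$ and set ${\mathcal B}_G=\{U\in{\mathcal B}:U\subseteq G\}$. Since $\mathcal B$ is a base, for each $x\in G$ there is $U\in\mathcal B$ with $x\in U\subseteq G$, and such $U$ is connected; hence ${\mathcal B}_G$ is a cover of $G$. By hypothesis $f(U)$ is connected for each $U\in{\mathcal B}_G$, so taking $A=G$ and letting $\{A_i\}_{i\in I}$ enumerate ${\mathcal B}_G$ already secures the first assumption of Lemma \ref{12}.

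Next I would verify the chain condition. For $x,y\in G$ declare $x\approx y$ if there are $U_1,\dots,U_n\in{\mathcal B}_G$ with $x\in U_1$, $y\in U_n$ and $U_k\cap U_{k+1}\ne\O$ for $1\le k<n$. This relation is reflexive (by the covering property), symmetric (reverse the chain) and transitive (concatenate two chains, the common point serving as the required witness of intersection), hence an equivalence relation on $G$. Each class is open: if $x$ lies in a class $C$ and $x\in U\in{\mathcal B}_G$, then every $z\in U$ is joined to $x$ by the one-term chain $U$, so $U\subseteq C$. Thus $G$ splits into disjoint open classes, and since $G$ is connected there is exactly one class; this is precisely the chaining hypothesis of Lemma \ref{12}. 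Applying that lemma yields that $f(G)$ is connected, and as $G\in{\mathcal G}(X)$ was arbitrary, $f\in{\rm wD}(X,Y)$.

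I expect the only delicate point to be the open-and-closed partition argument of the second step, namely converting connectedness of $G$ into the existence of finite chains; everything else is bookkeeping to fit the premises of Lemma \ref{12}.
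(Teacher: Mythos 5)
Your proof is correct and takes essentially the same route as the paper: cover $G\in{\mathcal G}(X)$ by members of $\mathcal B$ contained in $G$ and apply Lemma \ref{12}. The only difference is that where the paper simply cites Kuratowski for the fact that any two points of a connected space are joined by a finite chain of elements of an open cover, you prove that fact directly via the equivalence-class (open partition) argument, which is exactly the standard proof of the cited result.
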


\begin{proof}
  Choose an arbitrary set $G\in{\mathcal G}(X)$. Then $G=\bigcup\limits_{i\in I} B_i$, where $B_i\in \mathcal B$. According to \cite[p.~144]{Ku2} the sets $B_i$ for $i\in I$ satisfy the condition of Lemma~\ref{12}.  Thus, $f(G)$ is connected.
\end{proof}

\begin{proposition}
  Let $X$ be a topological vector space, $Y$ be a topological space. Then ${\rm S}(X,Y)\subseteq {\rm wD}(X,Y)$.
\end{proposition}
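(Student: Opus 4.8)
The plan is to reduce everything to the two auxiliary facts just established, Lemma~\ref{7} and Lemma~\ref{12}, once we observe that a topological vector space is locally connected with a convenient base. First I would recall that in a topological vector space $X$ the origin has a neighbourhood base consisting of balanced open sets. If $V$ is balanced, then for every $v\in V$ and every $t\in[0,1]$ we have $tv\in V$ (take $\lambda=t$ in the definition of a balanced set), so the whole segment $[0,v]$ lies in $V$; thus $V$ is star-shaped with respect to $0$ and, in particular, connected. Translating, the family $\mathcal B$ of all sets of the form $x_0+V$, with $V$ a balanced open neighbourhood of $0$, is a base of open connected sets. Hence $X$ is locally connected and $\mathcal B$ meets the hypotheses of Lemma~\ref{7}.

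Next I would verify the remaining hypothesis of Lemma~\ref{7}, namely that $f(U)$ is connected for every $U\in\mathcal B$. Each such $U$ is star-shaped with respect to its centre $c=x_0$: for every $u\in U$ the segment $[c,u]$ is contained in $U$. Consequently $U=\bigcup_{u\in U}[c,u]$, a union of segments all passing through the common point $c$. Since $f\in{\rm S}(X,Y)$, every set $f([c,u])$ is connected, and any two of the segments $[c,u_1]$ and $[c,u_2]$ intersect in $c$; therefore the covering $\{[c,u]:u\in U\}$ satisfies the chain condition of Lemma~\ref{12} in the most trivial way (the chain has length two). Applying Lemma~\ref{12} yields that $f(U)$ is connected.

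Finally, Lemma~\ref{7} then gives $f\in{\rm wD}(X,Y)$, which completes the argument.

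I expect the only point that genuinely requires care to be the first step: pinning down that a topological vector space really is locally connected, i.e. that balanced neighbourhoods are star-shaped (via the segment $[0,v]\subseteq V$) and hence connected. Once that structural observation is in place, the rest is bookkeeping that the two lemmas are precisely designed to absorb. It is worth noting that no separation, metrizability, or completeness assumption on $X$ or $Y$ is needed here, since neither Lemma~\ref{7} nor Lemma~\ref{12} uses any such property.
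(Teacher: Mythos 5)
Your proof is correct and follows essentially the same route as the paper: pass to a base of translates of balanced open neighbourhoods of zero, use star-shapedness to write each basic set as a union of segments through its centre, conclude connectedness of the image from segmentary connectedness plus the common point, and finish with Lemma~\ref{7}. The only cosmetic difference is that you invoke Lemma~\ref{12} (with chains of length two) where the paper directly uses the fact that a union of connected sets with a common point is connected.
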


\begin{proof}
  Let $f\in {\rm S}(X,Y)$. Choose a base $\mathcal U$ of balanced open neighborhoods of zero and write ${\mathcal B}=(U+x:x\in X, U\in{\mathcal U})$. Then
$\mathcal B$ is a base of open connected sets in $X$. Take an arbitrary $B\in{\mathcal B}$. Then $B=x+U$ for some $x\in X$ and $U\in{\mathcal U}$. Since $U$ is balanced, $[x,y]\subseteq B$ for each $y\in B$, consequently, $B=\bigcup\limits_{y\in B}[x,y]$. Then $f(B)=\bigcup\limits_{y\in B}f([x,y])$. Since $f([x,y])$ is connected for all $y\in B$ and $\bigcap\limits_{y\in B}f([x,y])\ne\O$, $f(B)$ is connected too. Hence, $f\in {\rm wD}(X,Y)$ by Proposition  \ref{7}.
\end{proof}

Remark that Proposition \ref{4} implies that ${\rm wG}({\mathbb R},Y)\setminus {\rm S}({\mathbb R},Y)\ne\O$ for every $T_1$-space $Y$ with $|Y|\ge 2$. However, the following result shows that if a function $f\in {\rm wG}(X,Y)$ has ``many'' points of continuity, then it is segmentary connected.

\begin{theorem}
  Let $X$ be a topological vector Hausdorff space, $Y$ be a topological space and let $f\in{\rm wG}(X,Y)$ be such a function that $D(f)\cap\ell$ is locally finite for every line $\ell\in X$. Then $f\in {\rm S}(X,Y)$.
\end{theorem}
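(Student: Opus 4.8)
The plan is to show that $f([a,b])$ is connected for an arbitrary segment $[a,b]\subseteq X$. The key structural fact I would exploit is the hypothesis that $D(f)\cap\ell$ is locally finite on every line. Applying this to the line $\ell$ through $a$ and $b$, the discontinuity points of $f$ lying on the segment $[a,b]$ form a locally finite, hence (since $[a,b]$ is compact) a finite, set. This finiteness is what converts a merely weak Gibson hypothesis into genuine connectedness on the one-dimensional segment.

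First I would enumerate the discontinuity points of $f$ on $[a,b]$ as $a\le t_1<t_2<\dots<t_m\le b$ (identifying $[a,b]$ with a real interval via the parametrization $t\mapsto (1-t)a+tb$), so that on each of the finitely many complementary open subsegments $f$ is continuous. On each closed subsegment between consecutive ``breakpoints,'' the image is the continuous image of a connected set and is therefore connected. The task then reduces to gluing these pieces together across each discontinuity point $t_k$ so that the total image $f([a,b])$ stays connected. Lemma~\ref{12} is tailor-made for exactly this gluing: I would cover $[a,b]$ by the closed subsegments on which $f$ is continuous together with the singletons $\{t_k\}$, and check the chain condition linking any two points of $[a,b]$ through overlapping pieces.

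The crux, and the main obstacle, is verifying that the image of a piece adjacent to a breakpoint $t_k$ actually meets the value $f(t_k)$ (or, more precisely, that $f(t_k)$ is not isolated from the images of the neighbouring subsegments), since otherwise the chain in Lemma~\ref{12} fails to connect across $t_k$. This is precisely where the weak Gibson property at $t_k$ enters. For a one-sided subsegment $[t_k,t_{k+1})$ approaching $t_k$, I would take a small open connected neighbourhood (e.g.\ a connected open set $G\in{\mathcal G}(X)$ whose closure contains $t_k$ and which lies in the region where $f$ is continuous on that side), so that $f(t_k)\in\overline{f(G)}$. Because $f$ is continuous on the open subsegment, $\overline{f(G)}$ is controlled by the limit of $f$ along the subsegment approaching $t_k$, forcing $f(t_k)$ into the closure of the adjacent piece's image. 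One must handle the one-sided nature carefully using that $X$ is a topological vector space (so half-open segments sit inside open connected sets abutting $t_k$) and that $Y$ need not be metric, so I would argue directly with closures rather than sequences.

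Once each breakpoint is shown to be linked to its neighbouring continuous pieces, I would assemble the decomposition: write $[a,b]=\bigcup_i A_i$ where the $A_i$ are the closed continuous subsegments and the adjusted singleton pieces, note that consecutive pieces overlap (or are linked via the weak Gibson argument above), and invoke Lemma~\ref{12} to conclude that $f([a,b])$ is connected. Since $[a,b]$ was an arbitrary segment, this gives $f\in{\rm S}(X,Y)$. The only genuinely delicate point is the closure argument at a discontinuity, where weak Gibson must be transformed into a statement about the adjacency of $f(t_k)$ to the image of the neighbouring open subsegment; everything else is the routine finite gluing supplied by Lemma~\ref{12}.
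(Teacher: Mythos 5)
Your architecture coincides with the paper's: reduce via local finiteness to pieces of the segment containing few discontinuity points, prove connectedness of the image of each piece by applying the weak Gibson property to an open connected set abutting the breakpoint, and glue the pieces with Lemma~\ref{12}. (The paper covers $S$ by subsegments $S_x$ with $|D(f)\cap S_x|\le 1$ rather than enumerating all breakpoints at once; that difference is cosmetic, as is your slip in claiming that the image of a \emph{closed} subsegment between consecutive breakpoints is ``the continuous image of a connected set'' --- $f$ is discontinuous at its endpoints, which your later treatment implicitly acknowledges.) The genuine gap is in the step you yourself call the crux: proving that $f(t_k)\in\overline{f(P)}$, where $P$ is the (half-)open subsegment adjacent to a breakpoint $t_k$.

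Your sketch of that step would not go through as written. You propose to take $G\in{\mathcal G}(X)$ abutting $t_k$ and ``lying in the region where $f$ is continuous on that side''; but no such open region need exist, since the hypothesis constrains $D(f)$ only on lines --- a set meeting every line in a locally finite set can be dense in $X$ (in ${\mathbb R}^2$, take a countable dense set with no three points collinear) --- so every open set whose closure contains $t_k$ may contain discontinuity points. More importantly, the weak Gibson property yields only $f(t_k)\in\overline{f(G)}$, and your assertion that ``$\overline{f(G)}$ is controlled by the limit of $f$ along the subsegment'' is unsupported: no limit along $P$ need exist in an arbitrary space $Y$, and any open $G$ necessarily contains points off the line, whose $f$-values are not related to $f(P)$ by anything you have established. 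The paper's way of obtaining this control is contrapositive and hinges on a separation step: assuming $f(t_k)\notin\overline{f(P)}$, one picks \emph{disjoint} open sets $W_1\ni f(t_k)$ and $W_2\supseteq f(P)$ (a regularity-type appeal; the paper phrases it via regularity of the Hausdorff TVS $X$, though the separation is carried out in $Y$), then uses continuity of $f$ at each point $x\in P$ to choose an open connected neighbourhood $U_x\not\ni t_k$ with $f(U_x)\subseteq W_2$, and sets $G=\bigcup_{x\in P}U_x$. This $G$ is open and connected (each $U_x\cup P$ is connected and all of them contain $P$), $t_k\in\overline{G}$, and $f(G)\subseteq W_2$, so weak Gibson forces $f(t_k)\in\overline{W_2}$, contradicting $W_1\cap\overline{W_2}=\O$. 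It is precisely this construction --- pointwise continuity neighbourhoods pushed into one fixed open set separated from $f(t_k)$ --- that converts the weak Gibson hypothesis into the adjacency you need, and it is what is missing from your proposal.
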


\begin{proof}
  Fix an arbitrary segment $S=[a,b]\subseteq X$.

   We first consider the case $|D(f)\cap S|\le 1$. If $f|_S$ is continuous, then, evidently, $f(S)$  is connected. Now let $D(f)\cap S=\{x_0\}$.
    Suppose that $f(S)$ is not connected. Then at least one of the sets $f([a,x_0])$ or $f([x_0,b])$ is not connected. Without loss the generality we may assume that $f([a,x_0])$ is not connected. Since $f|_{[a,x_0)}$ is continuous, $f([a,x_0))$ is connected. Then $f(x_0)\not\in \overline{f([a,x_0))}$. Since $X$ is a topological vector Hausdorff space, it is regular, consequently, there are open disjoint sets $W_1$ and $W_2$ in $Y$ such that $f(x_0)\in W_1$ and $f([a,x_0))\in W_2$. The continuity of $f$ on $S\setminus \{x_0\}$ implies that for every $x\in[a,x_0)$ there exists an open connected neighborhood $U_x$ of $x$ in $S$ such that $x_0\not\in U_x$ and $f(U_x)\subseteq W_2$. It is easy to check that $V=\bigcup\limits_{x\in[a,x_0)}U_x$ belong to ${\mathcal G}(X)$ and $x_0\in\overline{V}$. Then, since $f\in{\rm wG}(X,Y)$, we have $f(x_0)\subseteq \overline{f(V)}\subseteq \overline{W_2}$, which implies a contradiction, because $W_1\cap\overline{W_2}=\O$.

Now consider the general case. Then for every $x\in S$ there exist $a_x,b_x\in S$ such that $|D(f)\cap [a_x,b_x]|\le 1$. Let $S_x=[a_x,b_x]$. Then $S=\bigcup\limits_{x\in S}S_x$. Moreover, it follows from what has already been proved that $f(S_x)$ is connected for every $x\in S$. Combining \cite[p.~144]{Ku2} and Lemma \ref{12} gives the connectedness of $f(S)$.
\end{proof}

\section{Baire-one functions with connected graphs}

\begin{theorem}\label{10}
  Let $X$ be a connected and locally connected space, $Y$ be a topological space and let $f:X\to Y$ be a weakly Gibson barely continuous function. Then
\begin{enumerate}
\item $f\in {\rm CG}(X,Y)$;

\item $f\in{\rm wD}(X,Y)$.
\end{enumerate}

\end{theorem}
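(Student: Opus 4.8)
The plan is to prove (1) by contradiction and then derive (2) from it via Lemma~\ref{7}. Suppose $\mathrm{Gr}(f)$ is disconnected, so there are open sets $\mathcal U,\mathcal V\subseteq X\times Y$ whose traces on $\mathrm{Gr}(f)$ are nonempty, disjoint and cover $\mathrm{Gr}(f)$. Put $E_1=\{x:(x,f(x))\in\mathcal U\}$ and $E_2=\{x:(x,f(x))\in\mathcal V\}$, so $X=E_1\sqcup E_2$ with both parts nonempty. Let $O_i=\mathrm{int}\,E_i$ and $P=X\setminus(O_1\cup O_2)$; since $O_1\cup O_2$ is open, $P$ is closed. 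First I observe $P\neq\varnothing$: otherwise $X=O_1\sqcup O_2$ would disconnect the connected space $X$, forcing one $O_i$ to be empty and hence (as $O_i\subseteq E_i$) the corresponding $E_i$ empty, a contradiction. Applying bare continuity to the closed set $P$ yields a point $x_0\in P$ at which $f|_P$ is continuous; by symmetry assume $x_0\in E_1$. Choosing a basic box $U\times V\subseteq\mathcal U$ around $(x_0,f(x_0))$ and shrinking, I get an open $U'\ni x_0$, $U'\subseteq U$, with $f(U'\cap P)\subseteq V$. The bookkeeping facts are then: $U'\cap P\subseteq E_1$ (a point of $U'\cap P$ has $f$-value in $V$ and lies in $U$, so its graph point is in $\mathcal U$); and $f(O_2\cap U)\subseteq Y\setminus V$ (a point of $O_2\cap U$ has graph point in $\mathcal V$, so it cannot also lie in $U\times V\subseteq\mathcal U$).

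The heart of the argument is to manufacture a single open connected set accumulating at $x_0$ on which $f$ avoids $V$; this is exactly where the weak Gibson hypothesis will bite, and it is the step I expect to be the main obstacle, since $O_2$ may fragment into infinitely many components near $x_0$, so $x_0$ need not lie in the closure of any one of them. I would circumvent this as follows. Pick, using local connectedness, a connected open neighbourhood $W$ of $x_0$ with $W\subseteq U'$. A short argument shows $O_2\cap W\neq\varnothing$: if such a $W$ missed $O_2$, then $W\cap E_2$ would lie in $P$, hence (as $U'\cap P\subseteq E_1$) be empty, giving $W\subseteq E_1$ and $x_0\in O_1$, contrary to $x_0\in P$. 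Now let $C$ be any connected component of the open set $O_2\cap W$; it is open, lies in $W$, and omits $x_0$, so $C\subsetneq W$. Were $\mathrm{fr}\,C\cap W$ empty, $C$ would be a nonempty proper clopen subset of the connected set $W$, forcing $C=W$, a contradiction; hence there is $z\in\mathrm{fr}\,C\cap W$. One checks $z\notin O_2$ (the components of $O_2\cap W$ are open and disjoint) and $z\notin O_1$ (a neighbourhood of $z$ lying in $E_1$ would miss $C\subseteq E_2$), so $z\in P$ and therefore $f(z)\in V$. On the other hand $C\subseteq O_2\cap U$ gives $f(C)\subseteq Y\setminus V$, so $\overline{f(C)}\subseteq Y\setminus V$; since $C\in{\mathcal G}(X)$ and $z\in\overline{C}$, weak Gibson at $z$ yields $f(z)\in\overline{f(C)}\subseteq Y\setminus V$, contradicting $f(z)\in V$. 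This contradiction proves (1).

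For (2) I would deduce weak Darboux-ness from (1) by localisation. By Lemma~\ref{7} it suffices to show that $f(U)$ is connected for each $U$ in a base of open connected sets. Fix such a $U$ and consider $f|_U\colon U\to Y$. Since $U$ is an open connected subspace of a locally connected space, it is itself connected and locally connected; weak Gibson passes to $f|_U$ immediately, because open connected subsets of $U$ are open connected in $X$ and closures in $Y$ are unchanged. The one point needing care is that bare continuity is inherited by the open subspace $U$: here I would invoke the standard strengthening that bare continuity of $f$ makes the continuity points of $f|_F$ dense in every closed $F\subseteq X$, so that for a set closed in $U$ its $X$-closure supplies a continuity point lying in $U$. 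Granting this, part (1) applied to $f|_U$ shows that $\mathrm{Gr}(f|_U)$ is connected, whence $f(U)=\mathrm{pr}_Y(\mathrm{Gr}(f|_U))$ is connected as a continuous image, and Lemma~\ref{7} gives $f\in\mathrm{wD}(X,Y)$.
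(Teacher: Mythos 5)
Your proof is correct and takes essentially the same route as the paper's: your set $P$ is exactly the paper's $F=\overline{X}_1\cap\overline{X}_2$ (since $X\setminus(\mathrm{int}\,E_1\cup \mathrm{int}\,E_2)=\overline{X}_2\cap\overline{X}_1$), your component-plus-boundary-point argument is the paper's (with Lemma~\ref{1} reproved inline for the connected neighbourhood $W$), the weak-Gibson contradiction at the boundary point is identical, and part (2) is likewise obtained by applying (1) to restrictions to open connected sets. The one delicate step, that bare continuity is inherited by open subspaces, is the same in both arguments --- the paper dismisses it as ``obvious'' while you make explicit the density-of-continuity-points strengthening it requires --- so you are, if anything, more careful than the paper on that point.
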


\begin{proof}
  {\bf (1).} Suppose, contrary to our claim, that ${\rm Gr}(f)=\Gamma_1\sqcup \Gamma_2$, where $\Gamma_1$ and $\Gamma_2$ are closed and open in ${\rm Gr}(f)$. For $i=1,2$ write $X_i=\{x\in X:(x,f(x))\in \Gamma_i\}$. Clearly,  $X=X_1\sqcup X_2$.
   As $X$ is connected we have $F=\overline{X}_1\cap\overline{X}_2$ is nonempty. Since $f$ is barely continuous, there exists $x_0\in C(f|_F)$. We may assume that $x_0\in X_1$. Since $\Gamma_1$ is open in ${\rm Gr}(f)$, there exist open neighborhoods $U$ and $V$ of $x_0$ and $f(x_0)$ in $X$ and $Y$, respectively, such that $(U\times V)\cap {\rm Gr}(f)\subseteq \Gamma_1$. Notice that the continuity of $f|_F$ at $x_0$ and the locally connectedness of $X$ imply that we can choose $U$ such that  $f(U\cap F)\subseteq V$ and $U$ is connected. Then $U\cap F\subseteq X_1$ since ${\rm Gr}(f|_{U\cap F})\subseteq U\times V$.

  Since $x_0\in \overline{X}_2$, $U\cap X_2\ne\O$. Choose any $x'\in U\cap X_2$. Then $x'\not\in F$. Let $G$ be a component of $x'$ in $U\setminus \overline{X}_1$. Notice that $U\cap G\ne\O\ne U\setminus G$. Then, $U\cap{\rm fr}G\ne\O$ by Lemma \ref{1}. It is not difficult to verify that  $U\cap{\rm fr}G\subseteq F$ since $G$ is a component. Fix $x''\in U\cap{\rm fr}G$. Then $f(x'')\in V$.
  But $f\in {\rm wG}(X,Y)$ and $x''\in\overline{G}$, then $f(x'')\in\overline{f(G)}\subseteq\overline{Y\setminus V}=Y\setminus V$ since $f(G)\subseteq Y\setminus V$,  a contradiction.

  {\bf (2).} Let $G\in{\mathcal G}(X)$. Obviously, the restriction $f|_G$ has the weak Gibson property and is barely continuous on $G$. Since $G$ is open in $X$, it is locally connected. Then (1) implies that ${\rm Gr}(f|_G)$ is connected in $G\times f(G)$. If $f(G)=B_1\sqcup B_2$, where $B_i$ is open and closed in $f(G)$ for $i=1,2$, then ${\rm Gr}(f|_G)\subseteq (G\times B_1)\sqcup (G\times B_2)$, where $G\times B_i$ is open and closed in $G\times f(G)$ for $i=1,2$, which contradicts the connectedness of ${\rm Gr}(f|_G)$.
  \end{proof}

\begin{theorem}\label{11}
  Let $X$ be a topological space, $Y$ be a $T_1$-space and $f:X\to Y$ be a barely continuous function. If
  \begin{enumerate}
    \item $X$ is arcwise connected and $f$ is a Darboux function, or

    \item $X$ is a topological vector space and $f$ is segmentary connected,
  \end{enumerate}
    then $f\in {\rm CG}(X,Y)$.
\end{theorem}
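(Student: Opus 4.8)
The plan is to argue by contradiction from a disconnection of the graph and to reduce everything to a one--dimensional statement. So suppose ${\rm Gr}(f)=\Gamma_1\sqcup\Gamma_2$ with $\Gamma_1,\Gamma_2$ nonempty and clopen in ${\rm Gr}(f)$, and put $X_i=\{x:(x,f(x))\in\Gamma_i\}$, so $X=X_1\sqcup X_2$. In both cases $X$ is connected (an arcwise connected space is connected, and a topological vector space is connected), so I may fix $a\in X_1$ and $b\in X_2$. I then join $a$ to $b$ by a homeomorphic image $P$ of $[0,1]$ supplied by the hypotheses: a segment $[a,b]$ in case (2) and an arc in case (1). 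The crucial feature of this choice is that $g=f|_P$, read through a homeomorphism $\gamma:[0,1]\to P$, is a Darboux function on $[0,1]$: every connected subset of $[0,1]$ is an interval, its $\gamma$--image is a subsegment (respectively a subarc), and $f$ carries it onto a connected set by segmentary connectedness (respectively by the Darboux property). Hence it suffices to show that such a $g$ has a connected graph, for then ${\rm Gr}(f|_P)$ is connected while $(a,f(a))\in\Gamma_1$ and $(b,f(b))\in\Gamma_2$ split it into two nonempty clopen pieces, a contradiction.

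The one--dimensional core I would isolate as a lemma: \emph{if $Y$ is a $T_1$--space and $g:[0,1]\to Y$ is Darboux and barely continuous, then ${\rm Gr}(g)$ is connected.} To prove it, suppose again ${\rm Gr}(g)=\Gamma_1\sqcup\Gamma_2$, set $I_i=\{t:(t,g(t))\in\Gamma_i\}$ and $F=\overline{I_1}\cap\overline{I_2}$; since $[0,1]$ is connected, $F\ne\varnothing$, and bare continuity yields a point $t_0\in C(g|_F)$, say $t_0\in I_1$. As $\Gamma_1$ is open in the graph there are an interval neighbourhood $U\ni t_0$ and an open $V\ni g(t_0)$ with $(U\times V)\cap{\rm Gr}(g)\subseteq\Gamma_1$; shrinking $U$ by continuity of $g|_F$ at $t_0$ I may also assume $g(U\cap F)\subseteq V$. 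Then $U\cap F\subseteq I_1$ and $g(t)\notin V$ for every $t\in U\cap I_2$. Since $t_0\in\overline{I_2}$, pick $t'\in U\cap I_2$; moving from $t'$ towards $t_0$ inside the interval $U$ and taking the first parameter at which the segment meets the closed set $\overline{I_1}$ produces $t''\in U\cap F\subseteq I_1$ with $[t',t'']\setminus\{t''\}\subseteq U\cap I_2$. Now $g([t',t''])$ is connected, yet $g([t',t''])\cap V=\{g(t'')\}$ is a singleton that is closed (here $Y$ is $T_1$) and relatively open in $g([t',t''])$, and it is proper because $g(t')\notin V$; this disconnects $g([t',t''])$, the desired contradiction.

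With the lemma in hand, case (1) follows by taking $g=f\circ\gamma$ for an arc $\gamma:[0,1]\to X$ with $\gamma(0)=a$ and $\gamma(1)=b$: $g$ is Darboux as above, and it is barely continuous because for each closed $K\subseteq[0,1]$ the set $\gamma(K)$ is compact, hence closed in $X$, so a continuity point of $f|_{\gamma(K)}$ pulls back through the homeomorphism $\gamma|_K$ to a continuity point of $g|_K$; transporting ${\rm Gr}(g)$ by $(t,y)\mapsto(\gamma(t),y)$ then shows ${\rm Gr}(f|_A)$ connected. For case (2) one may apply the lemma along $[a,b]$ in the same way, but it is cleaner to run the transition argument directly in $X$: from the disconnection one gets $X=X_1\sqcup X_2$, $F=\overline{X_1}\cap\overline{X_2}$, and by bare continuity a point $x_0\in C(f|_F)\cap X_1$ with neighbourhoods $U\ni x_0$, $V\ni f(x_0)$ satisfying $(U\times V)\cap{\rm Gr}(f)\subseteq\Gamma_1$ and $f(U\cap F)\subseteq V$. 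Choosing $U=x_0+W$ with $W$ a balanced neighbourhood of $0$ makes $U$ star--shaped about $x_0$, so for $x'\in U\cap X_2$ the whole segment $[x_0,x']$ lies in $U$; its first point $x''$ in $\overline{X_1}$ satisfies $x''\in U\cap F\subseteq X_1$ and $[x',x'']\setminus\{x''\}\subseteq U\cap X_2$, and segmentary connectedness of $f$ on the segment $[x',x'']$ yields the same clopen--singleton contradiction.

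I expect the real difficulty to be case (1). In the vector space case the balanced, star--shaped neighbourhoods force every segment issuing from $x_0$ to remain inside $U$, which is exactly what lets the transition point $x''$ be captured within $U$; for a merely arcwise connected $X$ there is no such control, and an arc joining $x_0$ to a nearby point of $X_2$ may wander far outside $U$, so the localised transition argument breaks down. This is why in case (1) I abandon localisation and pass to the whole arc and the one--dimensional lemma. The price is the verification that bare continuity transfers to the arc, which rests on arcs being compact and therefore closed in $X$, so that closed subsets of $[0,1]$ correspond to closed subsets of $X$ on which $f$ is barely continuous; this is the step that implicitly needs $X$ to be Hausdorff and where I would be most careful.
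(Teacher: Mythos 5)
Your proof is correct, but it follows a genuinely different route from the paper's. The paper argues directly rather than by contradiction: it fixes one point $x_0$, writes $X=\bigcup_{x\ne x_0}A_x$ as a union of arcs (case (1)) or segments (case (2)) all emanating from $x_0$, observes that each restriction $f|_{A_x}$ is Darboux and barely continuous, hence weakly Gibson by Proposition~\ref{6} (this is where the $T_1$ hypothesis enters for the paper), and then applies Theorem~\ref{10}(1) to the connected, locally connected space $A_x$ to conclude that each ${\rm Gr}(f|_{A_x})$ is connected; since all these graphs contain the common point $(x_0,f(x_0))$, their union ${\rm Gr}(f)$ is connected. You instead disconnect the graph, join the two pieces by a single arc/segment, and replace the appeal to Theorem~\ref{10} by a self-contained one-dimensional lemma, proved via the order structure of $[0,1]$ (the first parameter at which the segment meets $\overline{I_1}$) and using $T_1$ directly (the singleton $\{g(t'')\}$ is clopen in the connected image $g([t',t''])$) instead of the weak Gibson property and the boundary lemma (Lemma~\ref{1}). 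The paper's route buys economy: it reuses the machinery of Theorem~\ref{10}, and the positive ``union of connected sets with a common point'' formulation avoids contradiction altogether. Your route buys independence from Section~4, plus your direct balanced-neighbourhood argument in case (2), which runs entirely inside $X$, needs no homeomorphism of $[a,b]$ with $[0,1]$, and never restricts bare continuity to a subspace. On that last point you are in fact more careful than the paper: both proofs need bare continuity to pass to the arc $A_x$, which holds because arcs are compact and hence closed --- implicitly a Hausdorff-type assumption on $X$ --- and the paper glosses over this (it even describes the restriction as merely ``pointwise discontinuous'', which by itself would not suffice to invoke Theorem~\ref{10}), whereas you flag the issue explicitly. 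One small point to tidy in your write-up: in case (2), the image under $\gamma$ of a half-open or open interval is a segment missing one or both endpoints, to which segmentary connectedness does not literally apply; one fixes this by writing such a set as a union of closed subsegments with a common point, though your preferred direct argument for case (2) only ever uses closed segments and so sidesteps the issue entirely.
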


\begin{proof}
  Fix $x_0\in X$. In the case (1) for every point $x\in X$, $x\ne x_0$, there exists a homeomorphic embedding $h_x:[0,1]\to X$
  satisfying $h_x(0)=x_0$ and $h_x(1)=x$.  Let $X_0=X\setminus\{x_0\}$ and $A_x=h_x([0,1])$ in the case (1) (and $A_x=[x_0,x]$ in the case (2)) for all $x\in X_0$.  Then $X=\bigcup\limits_{x\in X_0}A_x$. For every $x\in X_0$ we consider a function $g_x=f|_{A_x}$. Then $g_x:A_x\to Y$ is pointwise discontinuous and $g_x\in{\rm D}(A_x,Y)$. By Proposition \ref{6} $g_x\in {\rm wG}(A_x,Y)$.

  Since $A_x$ is homeomorphic to $[0,1]$, it is connected and locally connected. Then Theorem \ref{10}(1) implies that $g_x\in{\rm CG}(A_x,Y)$ for all $x\in X_0$. Notice that ${\rm Gr}(f)=\bigcup\limits_{x\in X_0}{\rm Gr}(g_x)$. For every $x\in X_0$ the set ${\rm Gr}(g_x)$ is connected. Moreover, $(x_0,f(x_0))\in\bigcap\limits_{x\in X_0}{\rm Gr}(g_x)$. Hence, ${\rm Gr}(f)$ is a connected set.
  \end{proof}

 Since every Baire-one function between a hereditarily Baire space and a metric space is barely continuous \cite[p.~125]{Bou}, we have the following corollary from Theorem \ref{10} and Theorem \ref{11}.

  \begin{theorem}\label{5}
  Let $X$ be a hereditarily Baire space, $Y$ be a metric space and \mbox{$f\in {\rm B}_1(X,Y)$}. If

\begin{enumerate}
\item $X$ is a connected and locally connected space and $f\in {\rm wG}(X,Y)$, or

\item $X$ is an arcwise connected space and $f\in {\rm D}(X,Y)$, or

\item $X$ is a topological vector space and $f\in {\rm S}(X,Y)$,

\end{enumerate}
then $f$ has a connected graph.
  \end{theorem}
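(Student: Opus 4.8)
The plan is to recognize this statement as a common refinement of Theorems~\ref{10} and~\ref{11}: in all three cases the geometric hypotheses on $X$ and the Gibson/Darboux/segmentary hypothesis on $f$ are exactly what those theorems require, so the only genuine task is to replace the hypothesis ``$f$ is barely continuous'' appearing there by the present assumptions ``$X$ hereditarily Baire, $Y$ metric, $f\in{\rm B}_1(X,Y)$.'' Thus I would first prove, once and for all, that under these assumptions every such $f$ is barely continuous, and then dispatch the three cases by quoting the appropriate part of Theorem~\ref{10} or Theorem~\ref{11}.

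To establish barely continuity, fix an arbitrary closed set $F\subseteq X$ and consider the restriction $f|_F:F\to Y$. Since $f=\lim_n f_n$ pointwise for continuous $f_n$, restricting gives $f|_F=\lim_n (f_n|_F)$ with each $f_n|_F$ continuous, so $f|_F\in{\rm B}_1(F,Y)$. Because $X$ is hereditarily Baire, $F$ is a Baire space, i.e.\ of the second category in itself. Now I would invoke the classical fact that the discontinuity set of a Baire-one function into a metric space is of the first category (the same Baire-category argument already used above via \cite[p.~405]{Ku1}); hence $D(f|_F)$ is of the first category in $F$, and therefore $D(f|_F)\ne F$. Consequently $f|_F$ has a point of continuity, and since $F$ was an arbitrary closed set, $f$ is barely continuous. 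This is precisely the content of the remark \cite[p.~125]{Bou} stated just before the theorem.

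With barely continuity in hand, the three cases follow immediately, using that a metric space $Y$ is in particular a $T_1$-space. In case~(1), $X$ is connected and locally connected and $f\in{\rm wG}(X,Y)$ is barely continuous, so Theorem~\ref{10}(1) yields $f\in{\rm CG}(X,Y)$. In case~(2), $X$ is arcwise connected, $Y$ is $T_1$, and $f\in{\rm D}(X,Y)$ is barely continuous, so Theorem~\ref{11}(1) applies. In case~(3), $X$ is a topological vector space, $Y$ is $T_1$, and $f\in{\rm S}(X,Y)$ is barely continuous, so Theorem~\ref{11}(2) applies. In each case the conclusion is exactly $f\in{\rm CG}(X,Y)$, as required. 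I expect the only nontrivial input to be the barely-continuity step of the second paragraph: its heart is the metric-codomain Baire-category theorem guaranteeing a first-category discontinuity set, combined with the hereditary Baire property that upgrades ``first category in $F$'' into ``not all of $F$.'' Everything else is a routing of the hypotheses into the already-established Theorems~\ref{10} and~\ref{11}.
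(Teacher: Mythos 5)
Your proposal is correct and follows exactly the paper's route: the paper proves this theorem as an immediate corollary of Theorems~\ref{10} and~\ref{11}, citing \cite[p.~125]{Bou} for the fact that every Baire-one function from a hereditarily Baire space to a metric space is barely continuous. The only difference is that you spell out the standard oscillation/category proof of that cited fact, which the paper leaves to the reference.
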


Remark that a function $f\in{\rm B_1}({\mathbb R}^2,{\mathbb R})$ with a connected graph does not necessarily have either the weak Darboux property, or the weak Gibson property. Indeed, if we consider a function
$f:{\mathbb R}^2\to{\mathbb R}$, $f(x,0)=-x^2$ for $|x|\le1$ and $f(x,y)=0$ for $(x,y)\in {\mathbb R}^2\setminus ([-1,1]\times\{0\})$, then $f$ satisfy the required conditions.

If $X=Y={\mathbb R}$, then the conditions (1), (2) and (3) of Theorem \ref{5} are equivalent. If $X={\mathbb R}^n$ ($n\ge 2$), $Y=\mathbb R$, then $(2)\Rightarrow (1)$ and $(2)\Rightarrow (3)$. Example~\ref{15} gives $(3)\not\Rightarrow (1)$ and, consequently, $(3)\not\Rightarrow (2)$. Proposition \ref{16} below shows, in particular, that $(1)\not\Rightarrow (3)$ and, consequently, $(1)\not\Rightarrow (2)$.

\begin{proposition}\label{16}
  $({\rm sG}({\mathbb R}^2,{\mathbb R})\cap B_1({\mathbb R}^2,{\mathbb R}))\setminus {\rm S}({\mathbb R}^2,{\mathbb R})\ne\O$.
\end{proposition}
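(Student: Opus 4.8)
\section*{Proof proposal for Proposition~\ref{16}}

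The plan is to exhibit an explicit $f\in({\rm sG}(\mathbb R^2,\mathbb R)\cap {\rm B_1}(\mathbb R^2,\mathbb R))\setminus {\rm S}(\mathbb R^2,\mathbb R)$. I would make the failure of segmentary connectedness as cheap as possible: arrange that $f$ is continuous off the $x$-axis $\ell=\{(x,0):x\in\mathbb R\}$, that $f\equiv 0$ on $\ell\setminus\{(0,0)\}$, and that $f(0,0)=1$. Then for the segment $[a,b]$ with $a=(-1,0)$, $b=(1,0)$ one has $f([a,b])=\{0,1\}$, which is disconnected, so $f\notin {\rm S}(\mathbb R^2,\mathbb R)$. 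For the Baire-one property I would use that $D(f)\subseteq\ell$, a closed nowhere dense set, together with Baire's characterisation (restriction to every nonempty closed set has a continuity point): if $F\not\subseteq\ell$ pick an off-axis point of $F$, where $f$ is continuous; if $F\subseteq\ell$ then $f|_F$ is $0$ off the origin and is therefore continuous at every point of $F\setminus\{(0,0)\}$. (Equivalently one writes $f=g\cdot h+\mathbf 1_{\{(0,0)\}}$ with $g,h$ Baire-one and invokes closure of ${\rm B_1}$ under products and sums.)

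The whole problem is thus reduced to verifying the strong Gibson property. At every point off $\ell$ the function is continuous, hence strongly Gibson (recall ${\rm C}\subseteq {\rm sG}$). So I only need strong Gibson at the origin, with value $1$, and at the remaining axis points $p'=(x_0,0)$, with value $0$. By the definition this means: for every $G\in{\mathcal G}(\mathbb R^2)$ accumulating at the point and every neighbourhood $U$, the relevant value lies in $\overline{f(G\cap U)}$. Writing $B_\varepsilon=\{\,|f-c|\ge\varepsilon\,\}$ for the appropriate $c$, strong Gibson at the point is equivalent to: no component of $\mathrm{int}\,B_\varepsilon$ accumulates at that point. In other words, the super-level set $\{f>1-\varepsilon\}$ must ``block'' every open connected set accumulating at the origin, and the sub-level set $\{f<\varepsilon\}$ must ``block'' every open connected set accumulating at each other axis point.

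The key construction is an oscillating profile near $\ell$ whose level sets are curves that \emph{surround} the relevant point but are \emph{pinched onto $\ell$}, so that the one-dimensional axis threads through the pinch while every positive-width approach is intercepted. Concretely I would build $f$ on $\mathbb R^2\setminus\ell$ from a phase that oscillates in a radial-type parameter (to create curves encircling the origin, giving value $1$ there) and, superimposed near $\ell$, oscillates in the perpendicular parameter $y$ so that on each encircling curve the trace sweeps a full $y$-interval down to $y=0$. The point of the sweep is the interval-projection lemma: an open connected $G$ accumulating at the origin has $\rho(G)$ an interval with $\inf=0$, and one accumulating at $p'$ has $y(G)$ an interval with $\inf=0$; consequently $G$ must meet every encircling curve of small radius, respectively cross every perpendicular sweep down to $0$. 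Arranging the dips toward $\ell$ to recur at all scales (rather than with a fixed width) is what lets the measure-zero axis sit exactly on the pinch locus while $f\equiv 0$ there, yet no open connected set can slip to the axis through the pinch.

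The hard part — the step I expect to be the real obstacle — is precisely the verification of strong Gibson \emph{at the axis points}, i.e.\ excluding the ``weaving/cusp'' open connected sets. A naive profile depending only on $\rho$, or only on $y$, fails: for a purely radial $\sin^2(1/\rho)$ the axis segment acquires the connected image $[0,1]$, whereas for a striped $\sin^2(1/y)$ one can build an open connected set that hugs a level stripe, reaches the point, and avoids the required value, so strong Gibson collapses. The delicate balance is that the blocking level sets must pinch to $\ell$ fast enough to catch \emph{arbitrarily thin} cusps (so no open channel reaches the point off-value) yet the exact line $\ell$ must lie in the complementary pinch so that its image omits an interval. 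I would resolve this by making the encircling/perpendicular level curves oscillate toward $\ell$ at every scale, reducing the non-crossing of any fixed open connected $G$ to the impossibility of an interval $\rho(G)$ (resp. $y(G)$) reaching $0$ while avoiding a set that is cofinal at all scales; this simultaneously forces the crossings to be localised in $U$, which is exactly what the ${\rm sG}$ definition demands.
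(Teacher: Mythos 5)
Your reduction is sound up to a point: the choice of axis values ($f=0$ on $\ell\setminus\{(0,0)\}$, $f(0,0)=1$) kills segmentary connectedness exactly as in the paper, and the Baire--one argument via continuity points of restrictions is fine. But the proposal stops where the statement actually has to be proved: no function $f$ is ever defined, and the strong Gibson property is not verified for anything; what is offered instead is a description of the difficulty plus two reduction principles, both of which are false. First, strong Gibson at $p$ is \emph{not} equivalent to ``no component of ${\rm int}\,B_\varepsilon$ accumulates at $p$''; that condition is necessary but not sufficient. Failure of the strong Gibson property only yields an open connected $G$ with $p\in\overline{G}$ and $G\cap U\subseteq B_\varepsilon$, and $G\cap U$ may consist of infinitely many pieces lying in pairwise \emph{distinct} components of ${\rm int}\,B_\varepsilon$, each at positive distance from $p$, linked to one another only outside $U$ (take disjoint closed ``fingers'' $F_j$ with ${\rm dist}(p,F_j)=1/j$, each stretching out past ${\rm fr}\,U$, and join their interiors by an open annulus outside $U$). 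So verifying your component condition would not prove membership in ${\rm sG}$. Second, ``cofinal at all scales'' is not a blocking principle: a set meeting every neighbourhood of the point at every scale need not be met by every open connected set accumulating at the point --- the positive $x$-axis is cofinal at all scales at the origin, yet the open upper half-plane accumulates at the origin and avoids it entirely. Blocking requires \emph{separation}, and separation is exactly what your scheme cannot deliver: your high-level curves cannot meet $\ell$ (there $f=0$), and in fact no connected piece of $\{f=c\}$ with $c>0$ may even accumulate at an axis point $(a,0)\neq(0,0)$, since the component of the open set $\{f>c/2\}\setminus\ell$ containing it would be an open connected set witnessing failure of the \emph{weak} Gibson property at $(a,0)$. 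Hence every ``encircling curve'' has genuine gaps on the axis, an adversarial open connected set can thread through those gaps at a smaller scale, and the recursion you gesture at (``oscillate at every scale'') is precisely the unproved core. It is not even clear that a strongly Gibson function with $D(f)\subseteq\ell$ and your prescribed values on $\ell$ exists at all.

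The paper sidesteps the entire difficulty by not insisting on continuity off the axis. It fixes open rectangles $P_n$ with pairwise disjoint closures missing the axis, $\bigcup_n P_n$ dense and ${\rm diam}\,P_n\to 0$, puts $f=f_n$ on $P_n$ with $f_n(P_n)=(0,1]$ and $f_n({\rm fr}\,P_n)=\{0\}$, and $f=0$ on the complement $B$ except $f(0,0)=1$; thus $D(f)=B$ is a nowhere dense closed set much larger than $\ell$. The decisive step is a dichotomy: every open connected $G\subseteq{\mathbb R}^2$ either lies inside a single $P_n$ or contains some $P_m$ entirely. Consequently an open connected set reaching a point of $B$ either does so through the closure of one rectangle (where $f$ is continuous) or swallows a whole rectangle inside any prescribed neighbourhood $U$, so that $\overline{f(G\cap U)}\supseteq\overline{(0,1]}=[0,1]$ contains \emph{every} value of $f$. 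This ``full-range blocker'' makes the strong Gibson verification uniform over all points of $B$ and over both required values $0$ and $1$ simultaneously; it is exactly the mechanism your separate high/low level-curve architecture lacks, and it is why the paper's proof closes while your sketch cannot.
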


\begin{proof}
  Let $(P_n)_{n=1}^\infty$ be a sequence of rectangles $P_n=(a_n,b_n)\times (c_n,d_n)$ with the following properties:
  \begin{enumerate}
    \item $\overline{P}_n\cap \{(x,0):x\in{\mathbb R}\}=\O$ for every $n\in{\mathbb N}$;

    \item $\overline{P}_n\cap \overline{P}_m=\O$ for distinct $m,n\in {\mathbb N}$;

    \item $A=\bigcup\limits_{n=1}^\infty P_n$ is dense in ${\mathbb R}^2$;

    \item $\lim\limits_{n\to\infty}{\rm diam}P_n=0$.
  \end{enumerate}

  We will show that for every nonempty set $G\in{\mathcal G}({\mathbb R}^2)$ there exists such an $n\in{\mathbb N}$ that either $P_n\subseteq G$, or $G\subseteq P_n$. Indeed, fix any nonempty open connected set $G\subseteq{\mathbb R}^2$. If $G\subseteq A$ then choosing $n\in{\mathbb N}$ such that $G\cap P_n\ne\O$, we have $G\subseteq P_n$ since $G$ is connected. Now denote $B={\mathbb R}^2\setminus A$ and assume that $G\cap B\ne\O$. Take $p_0\in G\cap B$ and $\delta>0$ such that $K_1=\{p\in{\mathbb R}^2:d(p,p_0)<2\delta\}\subseteq G$. Let $N_1=\{n\in{\mathbb N}: p_0\not\in \overline{P}_n\,\,\mbox{and}\,\, {\rm diam}\,P_n>\delta\}$. Then $|N_1|<\aleph_0$ and there exists $\delta_1<\delta$ such that $K_2\cap (\bigcup\limits_{n\in N_1}\overline{P}_n)=\O$, where $K_2=\{p\in {\mathbb R}^2: d(p,p_0)<\delta_1\}$.  Denote $N_2=\{n\in {\mathbb N}:p_0\in\overline{P}_n\}$. Remark that $|N_2|\le 1$ according to (2). Then, taking into account that $p_0\not\in\bigcup\limits_{n\in N_2}P_n$, we have $G_1=K_2\setminus (\bigcup\limits_{n\in N_2}\overline{P}_n)$ is nonempty set. Since $B$ is nowhere dense in ${\mathbb R}^2$, there exists such a number $n$ that $P_n\cap G_1\ne\O$. Since $n\not\in N_1\cup N_2$, ${\rm diam}\, P_n\le\delta$. Choose $p_1\in P_n\cap G_1$. Then for every $p\in P_n$
  $$
  d(p,p_0)\le d(p,p_1)+d(p_1,p_0)\le\delta+\delta_1<2\delta.
  $$
  Therefore, $P_n\subseteq K_1\subseteq G$.

  For every $n\in\mathbb N$ we consider a continuous function $f_n:\overline{P}_n\to[0,1]$ such that $f_n(P_n)=(0,1]$ and $f_n({\rm fr}\, P_n)=\{0\}$.
Define a function $f:{\mathbb R}^2\to {\mathbb R}$ by $f(p)=f_n(p)$ if $p\in P_n$, $f(0,0)=1$ and $f(p)=0$ if $p\in B\setminus \{0,0\}$. Notice that the restriction $f|_A$ is continuous and the restriction $f|_B$ is of the first Baire class. Hence, $f\in B_1({\mathbb R}^2,{\mathbb R})$. Moreover, for the segment $S=\{(x,0):x\in [0,1]\}$ we have $f(S)=\{0,1\}$. Thus, $f\not\in {\rm S}({\mathbb R}^2,{\mathbb R})$.

Finally, we will prove that $f\in{\rm sG}({\mathbb R}^2,{\mathbb R})$. Let $p_0\in{\mathbb R}^2$, $G\in{\mathcal G}({\mathbb R}^2)$, $p_0\in\overline{G}$ and $U\subseteq {\mathbb R}^2$ is  an open neighborhood of $p_0$. If $p_0\in A$ then the continuity of $f$ at $p_0$ implies that $f(p_0)\in\overline{f(U\cap G)}$. Now assume $p_0\in B$. If there exists such an $n\in{\mathbb N}$ that $p_0\in\overline{G\cap P_n}$, then $p_0\in\overline{U\cap G\cap P_n}$ and the continuity of $f|_{\overline{P}_n}$ implies that $f(p_0)\in\overline{f(U\cap G\cap P_n)}$, besides, $f(p_0)\in\overline{f(U\cap G)}$.

We will show that  there is $p_1\in U\cap G\cap B$ if $p_0\not\in \overline{G\cap P_n}$ for every $n\in {\mathbb N}$. Choose $\delta>0$ such that $\{p\in{\mathbb R}^2: d(p,p_0)<2\delta\}\subseteq U$. Since $p_0\not\in \overline{G\cap P_n}$ for all $n\in {\mathbb N}$, it follows from (4) that $p_0\not\in \overline{\bigcup\limits_{{\rm diam}(P_n)>\delta}G\cap P_n}$. Taking into account that $p_0\in\overline{G}$, we obtain that the set $\{k\in{\mathbb N}: G\cap P_n\cap U\ne\O\}$ is infinite. Take $n\in{\mathbb N}$ such that ${\rm diam}\, P_n<\delta$ and there is $p\in P_n\cap G$ with $d(p,p_0)<\delta$. Then $\overline{P}_n\subseteq U$. Notice that $P_n\cap G$ is open nonempty subset of the connected set $G$ which is not coincide with $G$. Then $P_n\cap G$ is not closed in $G$. Therefore, there exists $p_1\in {\rm fr}\, P_n\cap G\subseteq U\cap G\cap B$.

Let $G_1$ is a component of $p_1$ in $G\cap U$. Since $p_1\not\in A$, $G_1\not\subseteq P_m$ for every $m\in{\mathbb N}$. Thus, from what has already been proved, it follows that there exists such a number $m$ that $P_m\subseteq G_1$. Then $$f(p_0)\in [0,1]=\overline{f(P_m)}\subseteq \overline{f(G_1)}\subseteq \overline{f(G\cap U)}.$$
Hence, $f$ is strongly Gibson.
\end{proof}

\small{

}

\end{document}